\newtheorem{theorem}{Theorem}[section]
\newtheorem{thm}[theorem]{Theorem}
\newtheorem{prop}[theorem]{Proposition}
\newtheorem{lem}[theorem]{Lemma}
\newtheorem{claim}[theorem]{Claim}
\newtheorem{obs}[theorem]{Observation}
\newtheorem{defn}[theorem]{Definition}
\newtheorem{conjecture}[theorem]{Conjecture}
\newcommand{\beq}[1]{\begin{equation}\label{#1}}
\newcommand{\enq}[0]{\end{equation}}
\newcommand{\f}[0]{{\cal F}}
\newcommand{\p}[0]{{\cal P}}
\newcommand{\C}[0]{{\cal C}}
\newcommand{\pn}[0]{{\cal P}([n])}
\begin{document}
\global\long\def\f{\mathcal{F}}
\global\long\def\l{\mathcal{L}}
\global\long\def\pn{\mathcal{P}\left(\left[n\right]\right)}
\global\long\def\g{\mathcal{G}}
\global\long\def\s{\mathcal{S}}
\global\long\def\m{\mathcal{M}}
\global\long\def\mh{\mu_{\frac{1}{2}}}
\global\long\def\mp{\mu_{p}}
\global\long\def\j{\mathcal{J}}
\global\long\def\d{\mathcal{D}}
\global\long\def\Inf{\mathrm{Inf}}
\global\long\def\p{\mathcal{P}}
\global\long\def\mpo{\mu_{p_{0}}}
\global\long\def\fpp{f_{p}^{p_{0}}}
\global\long\def\llll{\l_{\mu}}
\global\long\def\h{\mathcal{H}}
\global\long\def\n{\mathbb{N}}
\global\long\def\a{\mathcal{A}}
\global\long\def\b{\mathcal{B}}
\global\long\def\sf{f_{2}}
\global\long\def\bin{\mathrm{Bin}}
\global\long\def\C{C_{2}}

\title{Approximation of Biased Boolean Functions of Small Total Influence by DNF's}

\author{
Nathan Keller\thanks{Department of Mathematics, Bar Ilan University, Ramat Gan, Israel.
{\tt nathan.keller27@gmail.com}. Research supported by the Israel Science Foundation (grant no.
402/13) and by the Binational US-Israel Science Foundation (grant no. 2014290).}
\mbox{ } and Noam Lifshitz\thanks{Department of Mathematics, Bar Ilan University, Ramat Gan, Israel.
{\tt noamlifshitz@gmail.com}.}
}

\maketitle

\begin{abstract}
The influence of the $k$'th coordinate on a Boolean function $f:\{0,1\}^n \rightarrow \{0,1\}$ is the probability that
flipping $x_k$ changes the value $f(x)$. The total influence $I(f)$ is the sum of influences of the
coordinates. The well-known `Junta Theorem' of Friedgut (1998) asserts that if $I(f) \leq M$, then $f$ can be $\epsilon$-approximated by
a function that depends on $O(2^{M/\epsilon})$ coordinates. Friedgut's theorem has a wide variety of applications in mathematics
and theoretical computer science.

For a biased function with $\mathbb{E}[f]=\mu$, the edge isoperimetric inequality on the cube implies that $I(f) \geq 2\mu \log(1/\mu)$.
Kahn and Kalai (2006) asked, in the spirit of the Junta theorem, whether any $f$ such that $I(f)$ is within a constant factor of the
minimum, can be $\epsilon \mu$-approximated by a DNF of a `small' size (i.e., a union of a small number of sub-cubes). We answer the question
by proving the following structure theorem: If $I(f) \leq 2\mu(\log(1/\mu)+M)$, then $f$ can be $\epsilon \mu$-approximated by a DNF of size $2^{2^{O(M/\epsilon)}}$. The dependence on $M$ is sharp up to the constant factor in the double exponent.
\end{abstract}

\section{Introduction}

\subsection{Background}

Let $f$ be a Boolean function on the discrete cube, that is, $f:\{0,1\}^n \rightarrow \{0,1\}$. The \emph{influence} of the $k$'th coordinate on $f$ is
\[
I_k(f)=\Pr[f(x) \neq f(x \oplus e_k)],
\]
where $x \oplus e_k$ is obtained from $x$ by flipping the the $k$'th coordinate and leaving the other coordinates unchanged. The \emph{total influence} (or, in short, the \emph{influence}) of $f$ is defined as $I(f)=\sum_{k=1}^n I_k(f)$.

The notion of influences appears naturally in many contexts, such as isoperimetric inequalities (as $I(f)$ equals, up to normalization, to the \emph{edge boundary} of the subset $\{x:f(x)=1\}$ of the discrete cube), threshold phenomena in random graphs, cryptographic properties of election functions, etc. As a result, the last three decades witnessed a very extensive study of the `theory of influences', that has led to numerous applications in areas as diverse as theoretical computer science (e.g., hardness of approximation~\cite{Dinur-Safra,Hastad} and machine learning~\cite{OS07}), percolation theory~\cite{BKS}, social choice theory~\cite{Mossel-Arrow}, and others (see the survey~\cite{Kalai-Safra}).

The minimal possible value of the total influence, as function of the expectation $\mathbb{E}[f]$, can be derived from the classical \emph{edge isoperimetric inequality on the cube}~\cite{Bernstein,Harper,Hart,Lindsay}, which asserts that for any $m$, among all the $m$-element subsets of the discrete cube, the minimal edge boundary is attained by the set of the $m$ largest elements in the lexicographic order. A weaker (but more convenient and so more widely-used) bound is:
\begin{thm}[Harper, Bernstein, Lindsey, Hart]
\label{Thm:Iso-weak}
For any $f:\{0,1\}^n \rightarrow \{0,1\}$, we have
\[
I(f) \geq 2\mu(f) \log(1/\mu(f)),
\]
where $\mu(f):=\mathbb{E}[f]$. Equality is attained if and only if $f$ is a sub-cube.
\end{thm}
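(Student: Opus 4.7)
I would prove the inequality by induction on the dimension $n$; the base case $n=1$ is immediate. For the inductive step, write $f:\{0,1\}^n\to\{0,1\}$ via its two restrictions $f_0,f_1:\{0,1\}^{n-1}\to\{0,1\}$ obtained by fixing the last coordinate, and let $\mu_i=\mathbb{E}[f_i]$, so $\mu=(\mu_0+\mu_1)/2$. The standard decomposition of total influence reads
\[
I(f) \;=\; \tfrac12 I(f_0) + \tfrac12 I(f_1) + I_n(f), \qquad I_n(f)=\Pr_{x'}[f_0(x')\neq f_1(x')].
\]
Substituting the inductive bounds for $I(f_0)$ and $I(f_1)$ reduces the theorem to the one-dimensional inequality
\[
\mu_0\log\tfrac1{\mu_0}+\mu_1\log\tfrac1{\mu_1}+I_n(f)\;\geq\;(\mu_0+\mu_1)\log\tfrac{2}{\mu_0+\mu_1}.
\]

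The core of the argument is to lower-bound $I_n(f)$. Writing $A_i=f_i^{-1}(1)$, the crude estimate $I_n(f)=|A_0\triangle A_1|/2^{n-1}\geq \bigl||A_0|-|A_1|\bigr|/2^{n-1}=|\mu_0-\mu_1|$ turns out to be enough. Indeed, setting $s=\mu_0+\mu_1$ and $p=\min(\mu_0,\mu_1)/s\in[0,1/2]$, a short computation simplifies the required inequality to
\[
H(p) \;\geq\; 2p,
\]
where $H(p)=-p\log p-(1-p)\log(1-p)$ is the binary entropy. This follows from concavity of $H$ on $[0,1/2]$ together with the endpoint values $H(0)=0$ and $H(1/2)=1$, which make $2p$ the chord of $H$ joining $0$ to $1/2$.

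For the equality case, tightness in $H(p)\geq 2p$ forces $p\in\{0,1/2\}$, while tightness in $I_n(f)\geq|\mu_0-\mu_1|$ forces $A_0\subseteq A_1$ or $A_1\subseteq A_0$. Combined with the inductive hypothesis that $f_0$ and $f_1$ must themselves be sub-cubes, only two scenarios remain: either $f_0=f_1$ is a sub-cube of $\{0,1\}^{n-1}$ (so $f$ is that sub-cube extended to be independent of $x_n$), or one of $f_0,f_1$ vanishes and the other is a sub-cube (so $f$ is a sub-cube of one higher codimension). In both cases $f$ is a sub-cube.

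The main subtlety is that the seemingly crude bound $I_n(f)\geq|\mu_0-\mu_1|$ is in fact sharp enough to close the induction; this works precisely because $H(p)\geq 2p$ is tight at both endpoints of $[0,1/2]$, exactly reflecting the two ways a sub-cube can either depend on or be independent of a given coordinate.
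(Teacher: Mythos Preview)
The paper does not actually prove Theorem~\ref{Thm:Iso-weak}; it is quoted as a classical result of Harper, Bernstein, Lindsey, and Hart, with references~\cite{Bernstein,Harper,Hart,Lindsay}, and no proof is supplied. So there is no argument in the paper to compare yours against.

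That said, your inductive proof is correct and is in fact one of the standard derivations of this weak form of the edge isoperimetric inequality. The decomposition $I(f)=\tfrac12 I(f_0)+\tfrac12 I(f_1)+I_n(f)$ is exactly the identity the paper records as~\eqref{Eq:InfInd}, the bound $I_n(f)\geq|\mu_0-\mu_1|$ is the right ingredient, and the reduction to $H(p)\geq 2p$ on $[0,1/2]$ is clean and tight at both endpoints, which is precisely what makes the equality analysis go through. Your treatment of the equality case is also correct: strict concavity of $H$ forces $p\in\{0,1/2\}$; in the case $p=1/2$ one gets $f_0=f_1$ a sub-cube, and in the case $p=0$ one restriction vanishes and the other is a sub-cube. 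The only point one might add for completeness is the trivial convention $0\log 0=0$ (so the constant~$0$ function is either excluded or treated as a degenerate sub-cube), but this is cosmetic.
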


One of the best-known and most widely-used results on influences is Friedgut's `Junta Theorem'~\cite{Friedgut98} which describes the structure of functions with a low influence:
\begin{thm}[Friedgut]
\label{Thm:Friedgut-Junta}
Let $f:\{0,1\}^n \rightarrow \{0,1\}$ be a balanced Boolean function (i.e., $\mathbb{E}[f]=1/2$) that satisfies $I(f) \leq M$, and let $\epsilon>0$. Then there exists a Boolean function $g$ that $\epsilon$-approximates $f$ (i.e., $\Pr[f(x) \neq g(x)] \leq \epsilon$) such that $g$ depends on $2^{O(M/\epsilon)}$ coordinates. The dependence on $M$ is sharp, up to a multiplicative constant.
\end{thm}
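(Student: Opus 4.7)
The plan is a Fourier-analytic argument on $\{-1,1\}^n$. Writing $f = \sum_S \hat f(S)\chi_S$ with $\chi_S(x) = \prod_{i\in S}x_i$ and using the standard formulas $I(f) = 4\sum_S |S|\,\hat f(S)^2$ and $I_k(f) = 4\sum_{S\ni k}\hat f(S)^2$, I will approximate $f$ by $g := \mathrm{round}(h)$, where $h := \sum_{S \subseteq J}\hat f(S)\chi_S$ is the $L^2$-projection of $f$ onto $J$-juntas for a suitable small $J \subseteq [n]$. Since $f$ is $\{0,1\}$-valued, $|f-h|\geq 1/2$ wherever $f\neq g$, so $\Pr[f\neq g]\leq 4\|f-h\|_2^2 = 4\sum_{S\not\subseteq J}\hat f(S)^2$; the whole task is to make this Fourier tail at most $\epsilon/4$.

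I split the tail at a degree parameter $d$ into $(\mathrm{I}) := \sum_{|S|>d}\hat f(S)^2$ and $(\mathrm{II}) := \sum_{|S|\leq d,\,S\not\subseteq J}\hat f(S)^2$. The influence identity gives $(\mathrm{I}) \leq M/(4d)$, so $d = CM/\epsilon$ for a sufficiently large $C$ handles $(\mathrm{I})$. Defining $J := \{k : I_k(f) \geq \tau\}$ automatically yields $|J| \leq I(f)/\tau \leq M/\tau$, so the goal reduces to choosing $\tau$ as large as possible while keeping $(\mathrm{II})$ small.

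The main obstacle is the bound on $(\mathrm{II})$, which requires the Bonami--Beckner hypercontractive inequality $\|T_\rho \varphi\|_2 \leq \|\varphi\|_{1+\rho^2}$. Apply it to the discrete derivative $\varphi = D_k f := \tfrac12\bigl(f(x^{k\to 1}) - f(x^{k\to -1})\bigr)$, which is $\{-1,0,1\}$-valued because $f$ is Boolean, so $\|D_k f\|_p^p = I_k(f)/4$ for every $p\geq 1$. Using $\widehat{D_k f}(T) = \hat f(T\cup\{k\})\mathbf{1}_{k\notin T}$ and truncating to $|S|\leq d$, one obtains
\[
\sum_{S\ni k,\,|S|\leq d}\hat f(S)^2 \;\leq\; \rho^{-2d}\bigl(I_k(f)/4\bigr)^{2/(1+\rho^2)}.
\]
Taking e.g.\ $\rho = 1/\sqrt{3}$ makes the exponent on $I_k(f)$ equal to $3/2$. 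For $k\notin J$ we have $I_k(f) < \tau$, so the right-hand side is at most $C_1^{\,d}\, I_k(f)\,\tau^{1/2}$ for an absolute constant $C_1$. Every $S$ counted in $(\mathrm{II})$ contains at least one such $k$, hence
\[
(\mathrm{II}) \;\leq\; \sum_{k\notin J}\sum_{S\ni k,\,|S|\leq d}\hat f(S)^2 \;\leq\; C_1^{\,d}\,\tau^{1/2}\sum_k I_k(f) \;\leq\; C_1^{\,d}\,\tau^{1/2}\,M.
\]

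Equating $C_1^{\,d}\tau^{1/2} M$ with $\epsilon/16$ gives $\tau = \Theta\bigl((\epsilon/M)^2 C_1^{-2d}\bigr)$, so $|J| \leq M/\tau = 2^{O(d)}(M/\epsilon)^{O(1)} = 2^{O(M/\epsilon)}$, as required. The hypercontractive step is where the Booleanness of $f$ is used essentially: it forces the exponent on $I_k(f)$ to be strictly above $1$, which is what allows us to trade $(\mathrm{II})$ for a \emph{polynomial} loss in $\tau$; without this, the threshold $\tau$ would have to be forced down so far that $|J|$ would blow up. The matching $2^{\Omega(M/\epsilon)}$ lower bound is witnessed by the tribes function.
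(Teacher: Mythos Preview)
The paper does not give its own proof of this theorem: it is stated as background (Friedgut's 1998 result, with a citation) and then used. Your argument is precisely the standard Friedgut proof --- hypercontractivity applied to the discrete derivatives $D_k f$, combined with the low-degree/high-degree split and the threshold set $J = \{k : I_k(f) \geq \tau\}$ --- and it is correct in structure and in all essential estimates.

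One small convention slip: with $f\colon\{0,1\}^n\to\{0,1\}$ and $D_k f(x)=\tfrac12\bigl(f(x^{k\to 1})-f(x^{k\to -1})\bigr)$, the range of $D_k f$ is $\{-\tfrac12,0,\tfrac12\}$, not $\{-1,0,1\}$, so $\|D_k f\|_p^p = 2^{-p} I_k(f)$ rather than $I_k(f)/4$ uniformly in $p$. This only perturbs absolute constants and does not affect the argument; the crucial fact you use --- that $\|D_k f\|_p^p$ is a fixed constant times $I_k(f)$ for every $p$, because $D_k f$ is three-valued --- remains valid. The tribes lower bound is indeed the standard sharpness example.
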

For a balanced function $f$, Theorem~\ref{Thm:Iso-weak} implies that $I(f) \geq 1$. Hence, Theorem~\ref{Thm:Friedgut-Junta} may be viewed as a \emph{structure theorem} for balanced functions with influence within a constant multiplicative factor of the minimum possible.

\medskip

While balanced functions and the uniform measure on the discrete cube are sufficient for many of the applications of Theorem~\ref{Thm:Friedgut-Junta}, some applications -- most notably, to threshold phenomena in random graphs and other structures -- require to generalize the results to biased functions (i.e., $\mathbb{E}[f] \neq 1/2$), and to the setting of the \emph{biased measure} $\mu_p$ on the discrete cube, defined by $\mu_p(x)=p^{\sum x_i} (1-p)^{n-\sum x_i}$. Theorem~\ref{Thm:Friedgut-Junta} extends easily to these settings. However, the dependence of the results on $\mathbb{E}[f]$ (resp. on $p$) is such that they become much less informative when $\mathbb{E}[f]=o(1)$ or $\mathbb{E}[f]=1-o(1)$ (resp. $p=o(1)$ or $p=1-o(1)$), as the size of the approximating Junta $g$ becomes `too large'.

The case of balanced functions with respect to a biased measure was studied in numerous works and led to breakthrough results on the sharpness of thresholds of graph properties, such as the $k$-SAT problem (see Friedgut~\cite{Friedgut-SAT}, Bourgain~\cite{Bourgain99}, Bourgain-Kalai~\cite{Bourgain-Kalai}, and Hatami~\cite{Hatami12}). In a nutshell, it was shown that while influence within a constant factor of the minimum possible does not imply that the function can be approximated by a Junta, it allows to say that the function admits a weaker structure called in~\cite{Hatami12} `pseudo-Junta', and if it is `somewhat symmetric' then stronger structural properties hold~\cite{Bourgain-Kalai,Friedgut-SAT}.

The case of biased functions with a \emph{very low} influence was also studied in a number of works. Those works aimed at proving a \emph{stability version} of the edge isoperimetric inequality on the cube, asserting that if the influence of $f$ is within a small (additive) distance of the minimum possible, then $f$ is close (in the $\ell^1$ norm) to the indicator function of an extremal family. After a series of works which proved stability in specific cases (Friedgut, Kalai and Naor~\cite{FKN}, Bollob\'as, Leader and Riordan (unpublished), Samorodnitsky~\cite{Samorodnitsky09}, and Ellis~\cite{Ellis}), the authors and Ellis recently proved stability for all values of $\mathbb{E}[f]$, obtaining the following structure theorem, which is sharp up to an absolute constant factor.
\begin{thm}[\cite{EKL16a}]
\label{Thm:LOL}
Let $\epsilon>0$ and let $f:\{0,1\}^n \rightarrow \{0,1\}$ be a Boolean function with $\mathbb{E}[f]=\mu$, such that $I\left(f\right)\le I\left(\l_{\mu}\right)+\epsilon$, where $\l_{\mu}$ is the characteristic function of the set of the $\mu 2^n$ maximal elements in the lexicographic order. Then there exists a Boolean function $g$ such that $\{x: g(x)=1\}$ is `weakly isomorphic' to $\l_{\mu}$, and $\Pr[f(x) \neq g(x)]\le C\epsilon$, where $C$ is a universal constant. The result is sharp up to the value of the constant $C$.
\end{thm}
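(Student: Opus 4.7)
My plan is to prove the theorem by induction on $n$, with the inductive step based on compression and a one-coordinate splitting. The standard edge-compression (``shifting'') operations do not increase $I(f)$, and can only decrease the $\ell^1$ distance of $f$ to some weakly isomorphic image of $\l_\mu$, so I may assume $f$ is compressed. Writing $f_i = f|_{x_1 = i}$ and $\mu_i = \mathbb{E}[f_i]$ for $i \in \{0,1\}$, compression yields $f_0 \subseteq f_1$, and a direct computation gives
\[
I(f) = (\mu_1 - \mu_0) + \tfrac{1}{2}\bigl(I(f_0) + I(f_1)\bigr),
\]
with $\mu = (\mu_0 + \mu_1)/2$.

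Setting $\iota(s) := I(\l_s)$ and $E(h) := I(h) - \iota(\mathbb{E}[h])$, the above identity rearranges to
\[
E(f) = \tfrac{1}{2} E(f_0) + \tfrac{1}{2} E(f_1) + \Phi(\mu_0, \mu_1),
\]
where $\Phi(\mu_0, \mu_1) := (\mu_1 - \mu_0) + \tfrac{1}{2}\bigl(\iota(\mu_0) + \iota(\mu_1)\bigr) - \iota(\mu)$ is nonnegative (by minimality of $\iota$) and vanishes exactly on those pairs $(\mu_0, \mu_1)$ that arise from a lex-initial splitting of $\l_\mu$ at coordinate~$1$. Under the hypothesis $E(f) \le \epsilon$, each of the three nonnegative summands is at most $\epsilon$, and a quantitative strict-convexity analysis of the piecewise function $\iota$ restricts $(\mu_0, \mu_1)$ to a controlled neighborhood of the zero set of $\Phi$. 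This places us in one of two regimes: a \emph{balanced} regime where $\mu_0 \approx \mu_1 \approx \mu$, in which induction applied to both $f_0$ and $f_1$ yields weakly isomorphic approximants that patch together through $x_1$; or an \emph{unbalanced} regime where $\mu_0 \ll \mu$, in which we replace $f_0$ by $0$ at $\ell^1$ cost $O(\epsilon)$ and apply induction to $f_1$ on the $(n-1)$-dimensional cube at measure close to $2\mu$.

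The main obstacle I anticipate is the error accounting. The recursion depth can reach $\Theta(\log(1/\mu))$, so a naive per-step loss of a constant factor would only yield $O(\epsilon \log(1/\mu))$, whereas the theorem demands $O(\epsilon)$ uniformly in $\mu$. To achieve the sharp dependence, one must exploit the telescoping built into the decomposition: summing along the recursion path shows that the total of the $\Phi$-contributions is bounded by $E(f) \le \epsilon$, and the $\ell^1$ error incurred by each unbalanced truncation can be charged directly to the corresponding $\Phi$-term rather than multiplied through. Making this global amortization rigorous --- and verifying that the ``break points'' of $\iota$ at $\mu = 2^{-k}$ do not disrupt the bookkeeping through some intermediate scale --- appears to be the technically most delicate step of the proof.
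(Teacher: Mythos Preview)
This theorem is not proved in the present paper. It is quoted in the introduction as a result from~\cite{EKL16a} (Ellis, Keller, and Lifshitz), and the body of the paper neither revisits it nor supplies a proof; the only related argument that appears here is the weaker stability statement of Theorem~\ref{thm:Ellis} (due to Ellis), which is also merely cited. Consequently there is no ``paper's own proof'' against which to compare your proposal.

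That said, your outline is broadly in the spirit of the approach taken in~\cite{Ellis} and~\cite{EKL16a}: reduce to a compressed family, split on a coordinate, and analyse the deficit $E(f)=I(f)-I(\l_{\mu})$ via the identity $I(f)=(\mu_1-\mu_0)+\tfrac{1}{2}(I(f_0)+I(f_1))$. The two points you flag as delicate are exactly the genuine difficulties. First, in the ``balanced'' regime you cannot simply patch together independent approximants for $f_0$ and $f_1$: even if each is close to some weakly isomorphic copy of $\l_{\mu_i}$, there is no a~priori reason the two copies are compatible (use the \emph{same} permutation of coordinates), and forcing compatibility is where the notion of weak isomorphism and the precise structure of $\l_\mu$ must be exploited. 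Second, your amortisation sketch is the right idea, but as stated it is not a proof: one must show that the $\ell^1$ cost of each truncation is bounded by an absolute constant times the corresponding $\Phi$-term, uniformly across the full range of $(\mu_0,\mu_1)$ and in particular near the break points $\mu=2^{-k}$ of $\iota$. Establishing that uniform constant is the heart of the argument in~\cite{EKL16a}, and your proposal does not yet indicate how you would obtain it.
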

While Theorem~\ref{Thm:LOL} solves the `stability' question (up to an absolute constant factor), it does not tell anything about the structure of functions whose influence is larger than the minimum by $\Omega(\mu)$, let alone functions whose influence is within a constant multiplicative factor of the minimum.

\subsection{The structure of low-influence biased functions}

In~\cite{KK06}, motivated by the study of threshold phenomena in random graphs and hypergraphs, Kahn and Kalai suggested to study the structure of biased Boolean functions whose influence lies within a constant factor of the minimum possible, i.e., $I(f) \leq C \mu \log(1/\mu)$, where $\mu:=\mathbb{E}[f]$. It is clear that such functions cannot be approximated by a constant-size Junta (as even the sub-cube of measure $\mu$, whose influence is the minimum possible, cannot be approximated by a function that depends on less than $\log(1/\mu)$ coordinates). Instead, the authors of~\cite{KK06} conjectured that $f$ can be approximated by a DNF of a small width.
\begin{conjecture}[Kahn and Kalai]
\label{Conjecture: Kahn Kalai} For any $C,\epsilon,\mu>0$, there exists $w=O_{C,\epsilon}\left(\log\left(1/\mu\right)\right)$
such that the following holds. Let $f\colon\left\{ 0,1\right\} ^{n}\to\left\{ 0,1\right\} $ be a monotone Boolean function with $\mathbb{E}[f]=\mu$.
Suppose that $I\left[f\right]\le C\mu\log (1/\mu)$. Then $f$ can be $\epsilon\mu$-approximated by a DNF of width at most $w$ (i.e., a union of sub-cubes of co-dimension at most $w$).
\end{conjecture}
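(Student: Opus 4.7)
The overall strategy is to adapt Friedgut's proof of Theorem~\ref{Thm:Friedgut-Junta} to the biased setting, replacing the approximation by a junta on $O(2^{M/\epsilon})$ variables by a DNF whose terms are sub-cubes of co-dimension $O(\log(1/\mu))$. The key conceptual shift is that in the biased case the ``trivial'' minimizer of the isoperimetric inequality (Theorem~\ref{Thm:Iso-weak}) is a sub-cube rather than a constant, so rather than committing to a single set of ``important'' variables we should build a covering by many sub-cubes, each encoding one local fixing of a small set of coordinates to $1$. Monotonicity of $f$ is essential here: it guarantees that the DNF terms can be taken to correspond to assignments in which selected coordinates are fixed to $1$ (and the rest are free), so that controlling co-dimension is the same as controlling the number of ``pinned'' coordinates.

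The natural implementation proceeds hierarchically. First, use biased Fourier analysis (or Bonami--Beckner hypercontractivity) to identify a set $J$ of coordinates whose individual influences exceed an appropriate threshold. A Friedgut-style counting argument bounds $|J|$ in terms of $I(f)$ and the threshold, and the hypothesis $I(f) \le C\mu\log(1/\mu)$ together with Theorem~\ref{Thm:Iso-weak} will allow us to take $|J|=O_{C,\epsilon}(\log(1/\mu))$. Next, branch on $J$: for each $\sigma\in\{0,1\}^J$ consider the restriction $f_\sigma$ and classify $\sigma$ as (i) \emph{small}, meaning $\mathbb{E}[f_\sigma]$ is negligible compared to $\mu$, in which case we replace $f_\sigma$ by the constant $0$; (ii) \emph{saturated}, meaning $f_\sigma$ is $\epsilon$-close to the constant $1$ on its sub-cube, contributing one DNF term of width $|J|$; or (iii) \emph{residual}, requiring further processing. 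The small restrictions contribute a total error of at most $\epsilon\mu/2$ if the threshold is tuned correctly, and the saturated restrictions yield a DNF of the desired width by construction.

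The main obstacle is handling case (iii): one must prove a dichotomy lemma asserting that any restriction $f_\sigma$ which is neither negligible nor close to $1$ still satisfies an influence-vs-measure bound of the form $I(f_\sigma)\le C'\mathbb{E}[f_\sigma]\log(1/\mathbb{E}[f_\sigma])$, with a constant $C'$ that decreases (or at least does not grow) after each recursion step. The averaging identities $\sum_\sigma 2^{-|J|}\mathbb{E}[f_\sigma]=\mu$ and $\sum_\sigma 2^{-|J|}I(f_\sigma)\le I(f)$, combined with Theorem~\ref{Thm:Iso-weak} applied term-by-term, should force the residual restrictions to absorb a controlled share of the ``excess influence'' $I(f)-2\mu\log(1/\mu)$; recursing on them then terminates after $O_{C,\epsilon}(1)$ levels. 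The hard part will be quantifying this decrement while keeping the cumulative width of each DNF term logarithmic in $1/\mu$ rather than paying an additive $O(\log(1/\mu))$ at every level, which is where biased hypercontractivity (whose constants degrade as $\mu\to 0$) and the Bourgain--Hatami pseudo-junta machinery must be invoked with care.
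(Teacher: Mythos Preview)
The statement you are attempting to prove is \emph{false}; the paper does not prove Conjecture~\ref{Conjecture: Kahn Kalai} but rather \emph{refutes} it. In Section~\ref{sec:Examples} the paper exhibits, for each $w,l\in\mathbb{N}$, the monotone function
\[
f(\mathbf{x})=\begin{cases}
\mathrm{Tribes}_{w,2^{w}}^{\dagger}(x_{1},\ldots,x_{n-l}) & x_{n-l+1}=\cdots=x_{n}=1,\\
0 & \text{otherwise},
\end{cases}
\]
which satisfies $I(f)=2\mu\bigl(\log(1/\mu)+\Theta(w)\bigr)$ yet cannot be $0.2\mu$-approximated by any DNF of width smaller than $\log(1/\mu)+\Theta(2^{w})$. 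Choosing $w$ comparable to $\log(1/\mu)$ keeps $I(f)\le C\mu\log(1/\mu)$ for a fixed $C>2$ while forcing the required DNF width to be polynomial in $1/\mu$, not $O_{C,\epsilon}(\log(1/\mu))$.

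Your plan breaks precisely at the point you flagged as ``the hard part.'' The recursion you describe cannot terminate in $O_{C,\epsilon}(1)$ rounds while adding only $O(\log(1/\mu))$ coordinates per round: in the dual-tribes core all $w2^{w}$ coordinates have equal, small influence, so no Friedgut-style threshold isolates a set $J$ of size $O(\log(1/\mu))$ that meaningfully reduces the problem. The averaging identities you quote are correct, but the hoped-for ``decrement in excess influence'' is not strong enough to force saturation after constantly many levels; indeed the paper's positive result (Theorem~\ref{thm:Main simplified}) shows the true dependence is a double exponential $2^{2^{O(M/\epsilon)}}$ in the excess $M$, and when $M=\Theta(\log(1/\mu))$ this matches the polynomial-in-$1/\mu$ width that the counterexample demands. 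In short, the dichotomy lemma you propose simply does not hold with the parameters required, and no amount of biased hypercontractivity or pseudo-junta machinery can rescue it, since the target statement is false.
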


It should be noted that the natural adaptation of Theorem~\ref{Thm:Friedgut-Junta} to the setting of Kahn-Kalai yields the following:
\begin{thm}[Friedgut]
\label{thm:Friedgut junta} For any $C,\epsilon,\mu>0$, there exists $j=j\left(C,\epsilon,\mu\right)=\left(1/\mu\right)^{O\left(C/\epsilon\right)}$
such that the following holds. Let $f\colon\left\{ 0,1\right\} ^{n}\to\left\{ 0,1\right\} $
be a Boolean function such that $\mathbb{E}[f]=\mu$. Suppose that $I\left(f\right)\le C\mu\log(1/\mu)$. Then $f$ can be
$\epsilon\mu$-approximated by a $j$-Junta (i.e., a function that depends on at most $j$ coordinates).
\end{thm}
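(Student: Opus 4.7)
The plan is to rerun Friedgut's original argument for Theorem~\ref{Thm:Friedgut-Junta} while tracking the dependence on the mean $\mu$. The conceptual point is that $\epsilon\mu$-approximation is a weaker goal than $\epsilon$-approximation, and this relaxation by a factor $\mu$ exactly compensates for the fact that the hypothesized bound $I(f)\leq C\mu\log(1/\mu)$ is much smaller than the constant bound used in the balanced case: the two factors of $\mu$ cancel, leaving a junta size depending only on $\log(1/\mu)$. In fact, since the balancedness hypothesis in Theorem~\ref{Thm:Friedgut-Junta} is not actually used in Friedgut's proof, one could simply invoke that theorem with $M=C\mu\log(1/\mu)$ and approximation parameter $\epsilon\mu$, obtaining a $2^{O(M/(\epsilon\mu))}=(1/\mu)^{O(C/\epsilon)}$-junta. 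Nonetheless it is instructive to outline the argument directly.

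Fix a threshold $t>0$ to be tuned later, and let $J=\{i:I_i(f)\geq t\}$; since $|J|\cdot t\leq I(f)$, one has $|J|\leq C\mu\log(1/\mu)/t$. The candidate approximator is the Fourier projection $g=\sum_{S\subseteq J}\hat{f}(S)\chi_S$ onto the subalgebra generated by $J$, a real-valued $|J|$-junta. Splitting $\|f-g\|_2^2=\sum_{S\not\subseteq J}\hat{f}(S)^2$ at level $k$, the high-degree tail $\sum_{|S|>k}\hat{f}(S)^2\leq I(f)/k$ (using $I(f)=\sum_S|S|\hat{f}(S)^2$) is at most $\epsilon\mu/8$ if $k=\Theta(\log(1/\mu)/\epsilon)$. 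For the low-degree remainder $\sum_{|S|\leq k,\,S\not\subseteq J}\hat{f}(S)^2\leq\sum_{i\notin J}\sum_{S\ni i,\,|S|\leq k}\hat{f}(S)^2$, I would apply Bonami--Beckner hypercontractivity to the discrete derivative $\nabla_i f$: for any $\rho\in(0,1)$ this yields a per-coordinate bound of the form $\rho^{-2k}I_i(f)^{2/(1+\rho^2)}$. Summing over $i\notin J$, using $I_i(f)<t$ there and $\sum_i I_i(f)\leq C\mu\log(1/\mu)$, and fixing $\rho^2=1/3$, one forces this contribution to be at most $\epsilon\mu/8$ by setting $t=\mu^{\Theta(C/\epsilon)}$ up to polylogarithmic factors in $1/\mu$.

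Finally, round $g$ to a Boolean function $g'$ by thresholding at $1/2$; since $|f-g'|\leq 2|f-g|$ pointwise whenever $f$ is Boolean, $\Pr[f\neq g']\leq 4\|f-g\|_2^2\leq\epsilon\mu$. Substituting the chosen $t$ into the size bound gives $|J|=(1/\mu)^{O(C/\epsilon)}$, as claimed. The only real technical content is the hypercontractive low-degree estimate in the previous paragraph; I do not expect a genuine obstacle, but some care is required to balance the exponents so that the final dependence is $(1/\mu)^{O(C/\epsilon)}$ rather than, say, $(1/\mu)^{O(C/\epsilon^2)}$ or worse.
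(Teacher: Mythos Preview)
Your proposal is correct and matches the paper's treatment. The paper does not actually prove Theorem~\ref{thm:Friedgut junta}; it is stated as a known result, described as ``the natural adaptation of Theorem~\ref{Thm:Friedgut-Junta} to the setting of Kahn--Kalai,'' i.e., exactly the one-line reduction you give in your first paragraph (apply Friedgut's theorem with $M=C\mu\log(1/\mu)$ and approximation parameter $\epsilon\mu$). Your additional direct sketch of Friedgut's argument with the parameters tracked is fine as well; the only cosmetic slip is that $k$ should be $\Theta(C\log(1/\mu)/\epsilon)$ rather than $\Theta(\log(1/\mu)/\epsilon)$, but this does not affect the final $(1/\mu)^{O(C/\epsilon)}$ bound.
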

This result, which is tight up to the constant in the exponent, does not tell anything when $\mu$ is polynomial in $n^{-1}$, as is the case for many applications. Kahn and Kalai hoped that by replacing the `Junta approximation' with approximation by a DNF, one can obtain a meaningful structure result also for polynomially small $\mu$.

\subsection{Our results}

Unfortunately, as we show below, Conjecture~\ref{Conjecture: Kahn Kalai} is too strong, and in fact, the width of the best approximating DNF may be as large as $2^{O_{C,\epsilon}\left(\log\left(1/\mu\right)\right)}$, which (like Theorem~\ref{thm:Friedgut junta}) tells us nothing for $\mu$ polynomially small in $n$. On the other hand, we show that (a variant of) Conjecture~\ref{Conjecture: Kahn Kalai} does hold if the assumption on $I(f)$ is a bit stronger. Our main result is the following:
\begin{thm}
\label{thm:Main simplified}For any $M,\epsilon>0$, there exists $s=s\left(M,\epsilon\right)=2^{2^{O\left(M/\epsilon\right)}}$
such that the following holds. Let $f\colon\left\{ 0,1\right\} ^{n}\to\left\{ 0,1\right\} $ be a Boolean function such that $\mathbb{E}[f]=\mu$.
Suppose that $I\left(f\right)\le 2\mu(\log(1/\mu)+M)$. Then $f$ can be $\epsilon \mu$-approximated by a DNF of size $s$ (i.e., a union of $s$ sub-subes). Consequently, $f$ can be $\epsilon \mu$-approximated by a DNF of width at most $\log(1/\mu)+2^{O(M/\epsilon)}$ (i.e., a union of sub-cubes of co-dimension at most $\log(1/\mu)+2^{O(M/\epsilon)}$).
\end{thm}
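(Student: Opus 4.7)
The strategy is to combine Friedgut's Junta Theorem (Theorem~\ref{Thm:Friedgut-Junta}) with a two-stage decomposition. The double-exponential target $2^{2^{O(M/\epsilon)}}$ strongly suggests applying Friedgut's singly-exponential bound in two nested stages: once to peel off a set of ``core'' coordinates, and once inside each of the resulting restrictions to produce a small junta that can be further expanded into sub-cubes.

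\textbf{Step 1 (Reduction to monotone $f$).} A standard shifting/compression argument replaces $f$ by a monotone $f^{*}$ with $\mathbb{E}[f^{*}] = \mu$ and $I(f^{*}) \leq I(f)$. Any approximation of $f^{*}$ by a monotone DNF of size $s$ pulls back to an approximation of $f$ by a DNF of the same size via undoing the shifts.

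\textbf{Step 2 (Peeling a core of heavy coordinates).} Iteratively identify a coordinate $i$ whose influence on the current restriction is at least, say, $c \epsilon \mu$, and split along $x_{i}$ into $f_{\mid x_{i}=0}$ and $f_{\mid x_{i}=1}$; recurse on each branch. Since the excess over the isoperimetric minimum is only $2\mu M$, after at most $O(M/\epsilon)$ peels the budget is exhausted and every remaining coordinate has small influence on the restricted function. The process builds a binary tree of depth $O(M/\epsilon)$ whose leaves form a family of $2^{O(M/\epsilon)}$ restrictions indexed by settings $\rho$ of a core $J$. For each leaf, either $\mathbb{E}[f_\rho]$ is negligible (and $f_\rho$ is absorbed into the error budget) or $\mathbb{E}[f_\rho] = \Omega(\mu)$, and the influence of $f_\rho$ remains within a constant additive factor of its isoperimetric minimum $2\mathbb{E}[f_\rho]\log(1/\mathbb{E}[f_\rho])$.

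\textbf{Step 3 (Friedgut on each leaf).} On each non-trivial leaf, further restrict to a ``skeleton'' sub-cube of codimension $\approx \log(1/\mathbb{E}[f_\rho])$ on which $f_\rho$ becomes essentially balanced; then invoke Theorem~\ref{Thm:Friedgut-Junta} to obtain a junta of $2^{O(M/\epsilon)}$ coordinates approximating $f_\rho$ on that sub-cube. Expanding this junta into its positive terms yields at most $2^{2^{O(M/\epsilon)}}$ sub-cubes per leaf, each of codimension $\log(1/\mu) + 2^{O(M/\epsilon)}$. Aggregating across all $2^{O(M/\epsilon)}$ leaves gives a DNF of total size $2^{O(M/\epsilon)} \cdot 2^{2^{O(M/\epsilon)}} = 2^{2^{O(M/\epsilon)}}$ with the claimed width bound, and the total $\mu$-error is bounded by summing the per-leaf errors.

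\textbf{Main obstacle.} The hardest part is Step 2: one must track how the tight influence bound $I(f) \leq 2\mu(\log(1/\mu) + M)$ degrades under coordinate restrictions. A naive union bound easily loses the precise form $\log(1/\mu) + M$ and collapses into the useless form $C\mu\log(1/\mu)$, rendering Friedgut's theorem vacuous in Step 3 whenever $\mu$ is polynomially small. Maintaining the additive form of the budget likely requires a Russo-type identity or a convexity argument tailored to the exact edge-isoperimetric inequality, and may need to invoke the stability theorem of Ellis-Keller-Lifshitz (Theorem~\ref{Thm:LOL}) on leaves where the restricted function remains very close to a sub-cube. A secondary subtlety is that a DNF of the unrestricted function requires Step 3 to deliver sub-cubes in the \emph{original} variables, not the restricted ones; monotonicity from Step 1 is essential to convert each junta term into an honest sub-cube without blowing up the size.
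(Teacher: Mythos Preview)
Your outline shares the paper's high-level shape (split along an influential coordinate and recurse), but it has real gaps at the two places that carry all the weight.

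\textbf{Step 2 fails as written.} The claim that the tree has depth $O(M/\epsilon)$ is not justified and is in fact wrong. Splitting on a coordinate with influence $\ge c\epsilon\mu$ only yields the \emph{weighted} inequality $\mu_0 M_0 + \mu_1 M_1 \le 2\mu M - c'\epsilon\mu$ (this is the paper's Lemma~\ref{lem:Win if the influence is medium}); it does not give $M_0,M_1 \le M - c'$ on each branch. Along one branch the excess $M_i$ can stay constant or even grow while shrinking on the other, so the depth is not controlled by a simple potential argument. The paper handles this by a recursion on the \emph{approximation parameter}: it proves $\tilde s(\epsilon)\le 2\tilde s(\epsilon+2^{-C_5/\epsilon})$, so the number of doubling steps is $2^{O(M/\epsilon)}$, not $O(M/\epsilon)$. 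The double exponential comes from the tree depth itself, not from expanding a junta at the leaves. Moreover, your Step~2 silently assumes that at every node there \emph{is} a coordinate with influence $\ge c\epsilon\mu$; this is exactly the paper's main lemma (Lemma~\ref{thm:Max-Inf-large}), proved via shifting plus KKL, and it is the nontrivial part of the whole argument. Without it, your process may stall at a node where all influences are $o(\mu)$ but the function is nowhere near a small DNF.

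\textbf{Step 3 is unsupported.} Once every individual influence is below $c\epsilon\mu$, nothing guarantees the existence of a ``skeleton'' sub-cube of codimension $\approx\log(1/\mathbb{E}[f_\rho])$ on which $f_\rho$ becomes balanced with total influence $O(M)$, so invoking Theorem~\ref{Thm:Friedgut-Junta} there is circular. The paper never uses Friedgut at the leaves; instead, the base case is Ellis's stability result (Theorem~\ref{thm:Ellis}): once the effective $\epsilon$ exceeds an absolute constant, the function is approximated by a \emph{single} sub-cube. Your ``Main obstacle'' paragraph correctly identifies that maintaining the additive form $\log(1/\mu)+M$ under restriction is the crux, but the fix is precisely the two ingredients you are missing: Lemma~\ref{thm:Max-Inf-large} to keep the recursion moving, and the careful bookkeeping of Lemmas~\ref{lem:Win if the influence is medium} and~\ref{lem:Win if the influence is super large} to show each doubling step makes $2^{-C/\epsilon}$ progress in $\epsilon$.
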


\noindent \textbf{Sharpness of the result.} Theorem~\ref{thm:Main simplified} is sharp, up to the constant in the exponent. The sharpness example is the intersection of a sub-cube of co-dimension $\approx \log(1/\mu)$ with the \emph{dual tribes function} introduced by Ben-Or and Linial~\cite{BL}.

\medskip

For $w,s \in \mathbb{N}$, the \emph{tribes} function $\mathrm{Tribes}_{w,s}: \{0,1\}^{ws} \rightarrow \{0,1\}$ is defined as
\begin{equation}
\mathrm{Tribes}_{w,s}(x_1,\ldots,x_{ws}) = (x_1 \wedge \ldots \wedge x_w) \vee (x_{w+1} \wedge \ldots \wedge x_{2w}) \vee \ldots
\vee (x_{(s-1)w+1} \wedge \ldots \wedge x_{sw}),
\end{equation}
and the \emph{dual tribes} function $\mathrm{Tribes}_{w,s}^{\dagger}: \{0,1\}^{ws} \rightarrow \{0,1\}$ is defined as
\begin{equation}
\mathrm{Tribes}_{w,s}^{\dagger}(x_1,\ldots,x_{ws}) = 1-\mathrm{Tribes}_{w,s}(1-x_1,\ldots,1-x_{ws}).
\end{equation}
Now, let $w,l \in \mathbb{N}$, let $n=w2^{w}+l$, and let $f$ be the function
\[
f\left(\mathbf{x}\right)=\begin{cases}
\mathrm{Tribes}_{w,2^w}^{\dagger}\left(x_1,x_2,\ldots,x_{n-l}\right) & x_{n-l+1}=\cdots=x_{n}=1\\
0 & \mbox{Otherwise}
\end{cases}.
\]
Write $\mu=\mathbb{E}[f]$. As we show in Section~\ref{sec:Examples}, we have $I(f)=2\mu\left(\log(1/\mu)+\Theta\left(w\right)\right)$, but
$f$ cannot be $0.2\mu$-approximated by any DNF of width at most $\log\frac{1}{\mu}+\Theta\left(2^{w}\right)$. In addition, $f$ cannot
be $0.1\mu$-approximated by a DNF of size at most $2^{\Theta\left(2^{w}\right)}$. This shows the sharpness of Theorem~\ref{thm:Main simplified}, and also
provides a counterexample for Conjecture~\ref{Conjecture: Kahn Kalai}.

\medskip \noindent \textbf{Range of applicability and meaning of the result.} Theorem~\ref{thm:Main simplified} is `interesting' in the range
\begin{equation}\label{Eq:Range}
2\mu(\log(1/\mu)+ \Omega(\mu)) \leq I(f) \leq 2\mu(\log(1/\mu)+ o(\log(1/\mu))).
\end{equation}
For values of the influence smaller than the l.h.s. of~\eqref{Eq:Range}, Theorem~\ref{Thm:LOL} can be applied to get approximation by a single sub-cube. For values larger than the r.h.s. of~\eqref{Eq:Range}, i.e., $I(f) \geq c\mu \log(1/\mu)$ for $c>2$, a stronger assertion can be deduced from the Junta approximation of Friedgut's Theorem~\ref{thm:Friedgut junta}.

For $I(f)$ in the range~\eqref{Eq:Range}, on the one hand, one cannot hope for approximation by a single sub-cube, as it can be easily seen that the union of $s$ sub-cubes satisfies $I(f) = 2\mu(\log(1/\mu)+ \Theta_s(\mu))$. On the other hand, the best one can obtain using Theorem~\ref{thm:Friedgut junta} is approximation by a Junta of size $\Omega(1/\mu)$. Our Theorem~\ref{thm:Main simplified} provides approximation by a DNF whose size is much smaller, and in particular, by a \emph{constant-size DNF} for any constant $M$. Hence, it seems to be the `right' structure result one would like to achieve, at least in the range $I(f) = 2\mu(\log(1/\mu)+ \Theta(1))$.

\medskip \noindent \textbf{Our techniques.} Like the proof of Friedgut's Junta theorem, our proof makes use of discrete Fourier analysis and hypercontractivity, via the seminal KKL theorem~\cite{KKL}. In addition, we use the classical \emph{combinatorial shifting} technique~\cite{Daykin,EKR}. To be more specific, the central novel ingredient in our proof is the following lemma, that (along with its proof method) may be of independent interest.
\begin{lem}
\label{thm:Max-Inf-large}There exists an absolute constant $C_1$ such that the following holds. Let $M,\delta>0$ satisfy $M/\delta>C$, and let
$\mu \in \left(0,1-\delta\right)$. Let $f\colon\left\{ 0,1\right\} ^{n}\to\left\{ 0,1\right\} $ be a Boolean function with $\mathbb{E}[f]=\mu$, and suppose that $I(f)\le2\mu\left(\log\left(1/\mu\right)+M\right)$.
Then
\[
\max_{i\in\left[n\right]}\left\{I_{i}\left(f\right)\right\} \ge2^{-C_1 M/\delta}\mu.
\]
\end{lem}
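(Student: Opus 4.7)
The plan is to prove the contrapositive: assume $\max_i I_i(f) < \tau := 2^{-C_1 M/\delta}\mu$ and derive $I(f) > 2\mu(\log(1/\mu) + M)$, which contradicts the hypothesis. I would begin by applying combinatorial shifting to reduce to the case where $f$ is monotone. Shifting in coordinate $i$ preserves $\mathbb{E}[f]=\mu$ and cannot increase $I(f)$, and one checks that the max-influence hypothesis is not affected by the reduction, so we may assume $f$ monotone from the outset.

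Next, I apply hypercontractivity to the discrete derivatives $D_i f$, which take values in $\{-1,0,1\}$ so that $\|D_i f\|_q^q = I_i(f)$. For every $\rho \in (0,1)$ the Bonami--Beckner inequality $\|T_\rho D_i f\|_2 \le \|D_i f\|_{1+\rho^2}$ yields
\[
\sum_{S \ni i} \rho^{2(|S|-1)} \hat f(S)^2 \;\le\; I_i(f)^{2/(1+\rho^2)}.
\]
Summing over $i$, bounding $\sum_i I_i(f)^{2/(1+\rho^2)} \le \tau^{(1-\rho^2)/(1+\rho^2)} I(f)$, and comparing with the Parseval identities for $I(f)$ and $\mathrm{Var}(f)$, I obtain (after an optimization in $\rho$) a Talagrand-type lower bound $I(f) \ge c\,\mathrm{Var}(f)\,\log(1/\tau)$. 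Since $\mu < 1-\delta$ gives $\mathrm{Var}(f) = \mu(1-\mu) \ge \delta\mu$, this reads $I(f) \ge c\,\delta\,\mu\,\log(1/\tau)$.

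The crux is to combine this Talagrand bound with the edge-isoperimetric inequality (Theorem~\ref{Thm:Iso-weak}) so as to retain the tight constant $2$ in front of $\log(1/\mu)$. I would split into two regimes. When the excess $I(f) - 2\mu\log(1/\mu)$ is very small, the stability result of Theorem~\ref{Thm:LOL} forces $f$ to be $L^1$-close to a lex-initial family, whose first block is a sub-cube with some coordinates of influence $\approx 2\mu \gg \tau$; transferring this across the $L^1$ distance contradicts the max-influence assumption. When the excess is larger, I combine the Talagrand bound with the isoperimetric inequality via a weighted sum (using $\mathrm{Var}(f) \ge \delta\mu$) to deduce $I(f) - 2\mu\log(1/\mu) \ge c'\,\delta\,\mu\,\log(\mu/\tau)$. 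Against the hypothesis $I(f) - 2\mu\log(1/\mu) \le 2\mu M$, this gives $\log(\mu/\tau) \le 2M/(c'\delta)$ and hence $\tau \ge \mu \cdot 2^{-O(M/\delta)}$, as required.

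The main obstacle is precisely in keeping the constant $2$ in front of $\log(1/\mu)$: a direct application of Talagrand's inequality gives only a constant $c<1$ there, so by itself it cannot produce a bound of the form $\tau \ge \mu \cdot 2^{-O(M/\delta)}$ --- it would give only $\tau \ge \mu^{O(1/\delta)} \cdot 2^{-O(M/\delta)}$. Closing this gap either requires a sharper ``biased KKL''-type inequality with the tight constant, or the two-regime argument above in which the isoperimetric stability of Theorem~\ref{Thm:LOL} handles the ``close to a sub-cube'' regime and the Talagrand bound handles the rest. I expect the clean merging of these two tools to be the central delicate point of the proof.
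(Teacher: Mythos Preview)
Your proposal has a genuine gap at exactly the point you flag as the ``central delicate point'': the claimed inequality $I(f) - 2\mu\log(1/\mu) \ge c'\delta\mu\log(\mu/\tau)$ does not follow from a ``weighted sum'' of the Talagrand bound $I(f) \ge c\,\delta\mu\log(1/\tau)$ and the edge-isoperimetric inequality $I(f) \ge 2\mu\log(1/\mu)$. Both are lower bounds on the same quantity $I(f)$; combining them yields at best $I(f) \ge \max$ of the two, never a bound on the \emph{excess} $I(f) - 2\mu\log(1/\mu)$ in terms of $\log(\mu/\tau)$. Your two-regime split does not close this gap either: Theorem~\ref{Thm:LOL} handles only excess $O(\mu)$, while Talagrand alone, as you correctly observe, yields only $\tau \ge \mu^{O(1/\delta)}2^{-O(M/\delta)}$; the intermediate range of excess $\Theta(M\mu)$ with $M$ large is covered by neither tool.

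The paper sidesteps this difficulty by a completely different mechanism. Rather than trying to sharpen the analytic inequality, it uses shifting more aggressively than mere monotonization: after monotonizing, one applies the shifts $\s_{S\{1\}}$ for all $S \subseteq \{2,\ldots,n\}$ so as to push the entire support of $f$ into the half-cube $\{x_1 = 1\}$. The resulting function $f^n$ satisfies $f^n_0 \equiv 0$, has $I_i(f^n) \le I_i(f)$ for every $i \ge 2$, and its restriction $f^n_1$ has measure exactly $2\mu$ and, crucially, $I(f^n_1) \le 2(2\mu)\bigl(\log\tfrac{1}{2\mu} + M\bigr)$ with the \emph{same} $M$. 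One then inducts on $\lceil\log(1/\mu)\rceil$: each step doubles $\mu$ while keeping $M$ fixed, and once $\mu \ge 1/4$ the normalized influence $\tilde I(f)$ is $O(M/\delta)$ and the KKL theorem applies directly. This measure-doubling induction is the idea your outline is missing.
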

Lemma~\ref{thm:Max-Inf-large} asserts that if the total influence of $f$ is `small', then $f$ must have an influential coordinate. For $\mu$ bounded away from 0 and 1, the Lemma follows immediately from the KKL theorem. We leverage the result to any measure $\mu$ by an inductive argument, based on the shifting technique.


\medskip \noindent \textbf{Organization of the paper.} In Section~\ref{sec:Prelim} we introduce notations to be used throughout the paper and describe the general structure of the proof of Theorem~\ref{thm:Main simplified}. In Section~\ref{sec:Lemmas} we prove the main lemmas we use in the sequel, including Lemma~\ref{thm:Max-Inf-large}. The proof of Theorem~\ref{thm:Main simplified} is presented in Section~\ref{sec:Proof}.
The sharpness examples are presented in Section~\ref{sec:Examples}, and we conclude the paper with a few open problems in Section~\ref{sec:Open}.

\medskip \noindent \textbf{Note.} Keevash and Long~\cite{KL17} have independently and simultaneously proved another version of our main theorem, with an upper bound of $2^{2^{O\left(M/\epsilon\right)^2}}$ on the size of the DNF (instead of our sharp $2^{2^{O\left(M/\epsilon\right)}}$). The methods of~\cite{KL17} is different from ours. Essentially, while we obtain our main lemma (i.e., Lemma~\ref{thm:Max-Inf-large} which asserts the existence of an influential coordinate) using combinatorial shifting and the classical KKL theorem, in~\cite{KL17} a slightly weaker version of the main lemma
is obtained using `heavier' analytic tools, including inequalities of Talagrand and Polyanskiy.

\section{Notations and Proof Overview}
\label{sec:Prelim}

\subsection{Notations}

First, for sake of completeness we give the formal definition of a DNF and its width and size.

A \emph{literal} is either a variable $x_{i}$ or its negation. A \emph{term }is an AND of literals, and a \emph{DNF} is an OR of terms. E.g., the following $\left(x_{1}\wedge\neg x_{2}\right)\vee\left(x_{2}\wedge x_{3}\right)\vee\left(x_{3}\wedge\neg x_{4}\wedge x_{5}\right)$ is a DNF formula. Let $D=T_{1}\vee T_{2}\vee\cdots\vee T_{s}$ be a DNF. The \emph{size} of $D$ is the amount of literals in $D$ (i.e., $s$). The \emph{width} of $D$ is the maximal number of literals in a term of $D$. (So, the above DNF has size 3 and width 3). We identify a DNF on $n$ variables with the Boolean function $f:\{0,1\}^n \rightarrow \{0,1\}$ defined as $f(x_1,\ldots,x_n)=1$ if and only if $(x_1,\ldots,x_n)$ satisfies the formula. Note that each term corresponds to a subcube, a DNF of
size $s$ corresponds to the characteristic function of the union of $s$ subcubes, and its width is the maximal co-dimension of a sub-cube that corresponds to one of its terms.

\medskip

Throughout the paper, $[n]$ denotes the set $\{1,2,\ldots,n\}$, and $C,c,C_i$ denote universal constants.
$f$ will be denote a Boolean function, i.e., $f\colon\left\{ 0,1\right\} ^{n}\to\left\{ 0,1\right\} $, and $\mathbb{E}[f]$ will be denoted by $\mu(f)$ or simply by $\mu$. We will assume throughout that $I_1(f)$ is the maximal influence of $f$. (There is no loss of generality in this assumption, as we can always reorder the coordinates of $f$.) We let $s_{f}\left(\epsilon\right)$ be the minimal size of a DNF that $\epsilon\mu\left(f\right)$-approximates $f$, and define $M$ to be such that
\[
I\left(f\right)=2\mu\left(f\right)\left(\log\left(1/\mu\left(f\right)\right)+M\right).
\]
(Note that $M \geq 0$ by Theorem~\ref{Thm:Iso-weak}.)

\medskip

The proof of Theorem~\ref{thm:Main simplified} will use an inductive approach, for which we will persistently use the following notations.
For a function $f$, we let $f_{1},f_{0}\colon\left\{ 0,1\right\} ^{n-1}\to\left\{ 0,1\right\} $ be the Boolean functions defined by
\[
f_{1}(x_1,\ldots,x_{n-1})=f\left(1,x_1,\ldots,x_{n-1}\right), \qquad f_{0}\left(x_1,\ldots,x_{n-1}\right)=f\left(0,x_1,\ldots,x_{n-1}\right).
\]
We write $\mu_{1}=\mu\left(f_{1}\right)$ and $\mu_{0}=\mu\left(f_{0}\right)$. Similarly, we let $M_{1},M_{0} \geq 0$ be the numbers satisfying
\[
I\left(f_{1}\right)=2\mu_{1}\left(\log\frac{1}{\mu_{1}}+M_{1}\right), \qquad I\left(f_{0}\right)=2\mu_{0}\left(\log\frac{1}{\mu_{0}}+M_{0}\right).
\]
We will use the following simple (and well-known) fact:
\begin{equation}\label{Eq:InfInd}
I(f) = \frac{1}{2}(I(f_1)+I(f_0)) + I_1(f).
\end{equation}

\subsection{Proof overview}

The inductive approach of the proof is based on the following simple observation:
\begin{obs}
If $f_{1}$ can be $\epsilon_{1}\mu_{1}$-approximated by a DNF of size $s_{1}$, and if $f_{0}$ can be $\epsilon_{0}\mu_{0}$-
approximated by a DNF of size at most $s_{0}$, then $f$ can be $\frac{\epsilon_{1}\mu_{1}+\epsilon_{0}\mu_{0}}{2}$-approximated
by a DNF of size at most $s_{1}+s_{0}$.
\end{obs}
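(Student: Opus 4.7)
The plan is to build a DNF for $f$ by splitting on the value of the first coordinate $x_1$ and combining the hypothesized DNFs for $f_0$ and $f_1$ accordingly. Concretely, fix a DNF $D_1 = T_1^{(1)} \vee \cdots \vee T_{s_1}^{(1)}$ on the variables $x_2,\ldots,x_n$ with $\Pr[f_1(y) \neq D_1(y)] \leq \epsilon_1 \mu_1$, and symmetrically fix a DNF $D_0 = T_1^{(0)} \vee \cdots \vee T_{s_0}^{(0)}$ with $\Pr[f_0(y) \neq D_0(y)] \leq \epsilon_0 \mu_0$. I would then define
\[
D \;:=\; \bigvee_{i=1}^{s_1} \bigl(x_1 \wedge T_i^{(1)}\bigr) \;\vee\; \bigvee_{j=1}^{s_0} \bigl(\neg x_1 \wedge T_j^{(0)}\bigr),
\]
which is a DNF on $\{0,1\}^n$ with exactly $s_1 + s_0$ terms, giving the size bound for free.

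For the error, the key observation is that when $x_1 = 1$ every term prefixed by $\neg x_1$ evaluates to $0$, so $D(1, x_2, \ldots, x_n) = D_1(x_2, \ldots, x_n)$; symmetrically, $D(0, x_2, \ldots, x_n) = D_0(x_2, \ldots, x_n)$. By the definition of $f_1$ and $f_0$, the same splitting holds for $f$, namely $f(1,y) = f_1(y)$ and $f(0,y) = f_0(y)$. Conditioning on $x_1$, which is uniform on $\{0,1\}$, and applying the law of total probability therefore yields
\[
\Pr[f(x) \neq D(x)] \;=\; \tfrac{1}{2}\Pr_y[f_1(y) \neq D_1(y)] + \tfrac{1}{2}\Pr_y[f_0(y) \neq D_0(y)] \;\leq\; \frac{\epsilon_1 \mu_1 + \epsilon_0 \mu_0}{2},
\]
which is exactly the desired approximation error.

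There is no real obstacle here: the observation is essentially a tautology once the construction above is written down, and the only thing to verify is that conjoining each term with the literal $x_1$ or $\neg x_1$ still produces a valid term (which is immediate). I expect the observation is stated mainly to fix the notation and bookkeeping for the inductive construction that will be iterated in the proof of Theorem~\ref{thm:Main simplified}, where at each step Lemma~\ref{thm:Max-Inf-large} is used to locate an influential coordinate to split on, and the bounds on $\epsilon_0, \epsilon_1, s_0, s_1$ will be controlled via the identity \eqref{Eq:InfInd} relating $I(f)$ to $I(f_0), I(f_1)$, and $I_1(f)$.
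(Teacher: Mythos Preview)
Your proof is correct and matches the paper's approach exactly: the paper states this observation without proof (calling it a ``simple observation''), and the construction $(x_1 \wedge D_1) \vee (\neg x_1 \wedge D_0)$ you write down is precisely the one the paper later spells out in the proof of Claim~\ref{Claim:medium mu_0}. Your error computation via conditioning on $x_1$ is the intended (and only reasonable) argument.
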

It follows that if $\epsilon_1,\epsilon_2$ are chosen such that $\epsilon_{1}\mu_{1}+\epsilon_{0}\mu_{0}=2\epsilon\mu$, then we have
\begin{equation}
s_{f}\left(\epsilon\right)\le s_{f_{1}}\left(\epsilon_{1}\right)+s_{f_{0}}\left(\epsilon_{0}\right).\label{eq:basic observation.}
\end{equation}
We perform the inductive step, rearranging the coordinates such that coordinate 1 is the \emph{most influential} one. We distinguish between three cases:
\begin{itemize}
\item \textbf{Both $\min\left\{ \mu_{0},\mu_{1}\right\} $ and $I_{1}\left(f\right)$ are `not too small'.} In this case, we use~\eqref{eq:basic observation.} to combine an $\epsilon_1$-approximation of $f_1$ with an $\epsilon_0$-approximation of $f_0$ into an approximation of $f$. We choose $\epsilon_1,\epsilon_0$ in such a way that $\frac{M_{0}}{\epsilon_{0}}=\frac{M_{1}}{\epsilon_{1}}$, so that the sizes of the DNFs approximating $f_1$ and $f_0$ will be roughly equal. While this step doubles the size of the approximating DNF (compared to those approximating $f_1,f_0$), we show that $\epsilon_1/M_1,\epsilon_0/M_0$ which replace $\epsilon/M$ are larger than $\epsilon/M$ by at least a fixed amount (which depends on $\epsilon$), and so, the number of required `doubling' steps will be eventually bounded.

\item \textbf{$\min\left\{ \mu_{0},\mu_{1}\right\} $ is `small'.} Of course, we may assume w.l.o.g. that $\mu_0$ is small. In this case, it is better to approximate $f_{0}$ by the constant $0$ function, rather than waste any subcubes on it. This step does not increase the size of the DNF, but seems to make the approximation worse. We show that nevertheless, the proof can go through, exploiting the (relatively) large influence of the first coordinate.

\item \textbf{$I_{1}\left(f\right)$ is `small'.} We conclude the proof by showing that this case is impossible, as any function with a small total influence must have an influential coordinate. This is the main part of the proof, encapsulated in Lemma~\ref{thm:Max-Inf-large}.
\end{itemize}

\section{The Central Lemmas}
\label{sec:Lemmas}

In this section we prove the two central lemmas needed for the proof of Theorem~\ref{thm:Main simplified}.

\subsection{Low-influence functions have an influential coordinate}

In this subsection we prove Lemma~\ref{thm:Max-Inf-large}. The proof requires two different types of tools -- Fourier-theoretic and combinatorial.

The Fourier-theoretic tool we use is the classical KKL theorem~\cite{KKL}. (The version presented here is taken from Section 9.6 of~\cite{O'D14}, where it is called `the KKL edge isoperimetric theorem').
\begin{thm}[Kahn, Kalai, and Linial]\label{Thm:KKL}
Let $f:\{0,1\}^n \rightarrow \{0,1\}$ be a non-constant Boolean function, and let $\tilde{I}(f)=\frac{I(f)}{4\mathbb{E}[f](1-\mathbb{E}[f])}$. Then
\[
\max_{1 \leq i \leq n} I_i(f) \geq \frac{9}{\tilde{I}(f)^2} 9^{-\tilde{I}(f)}.
\]
\end{thm}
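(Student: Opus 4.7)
The plan is to prove the KKL edge isoperimetric theorem via the classical Fourier-analytic argument based on the Bonami--Beckner hypercontractive inequality. First I would pass to the $\pm 1$ convention by setting $\tilde f = 2f - 1 : \{-1,1\}^n \to \{-1,1\}$: this preserves the individual influences $I_i(\tilde f) = I_i(f)$, and $\mathrm{Var}(\tilde f) = 4\mu(1-\mu)$, so the target inequality becomes a lower bound on $I(\tilde f)/\mathrm{Var}(\tilde f)$ in terms of $M := \max_i I_i(\tilde f)$. In the Fourier expansion $\tilde f = \sum_S \hat{\tilde f}(S)\chi_S$ the discrete derivative $\partial_i \tilde f$ takes values in $\{-1,0,1\}$, and one has the standard identities $I_i(\tilde f) = \sum_{S \ni i} \hat{\tilde f}(S)^2$, $I(\tilde f) = \sum_S |S|\,\hat{\tilde f}(S)^2$, and $\mathrm{Var}(\tilde f) = \sum_{S \neq \emptyset} \hat{\tilde f}(S)^2$.

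The heart of the argument is to apply hypercontractivity $\|T_\rho g\|_2 \le \|g\|_{1+\rho^2}$ (for $\rho \in [0,1]$) to $g = \partial_i \tilde f$. Because $\partial_i \tilde f$ is $\{-1,0,1\}$-valued, $\|\partial_i \tilde f\|_{1+\rho^2}^{1+\rho^2} = I_i(\tilde f)$, so
\[
\sum_{S \ni i} \rho^{2(|S|-1)} \hat{\tilde f}(S)^2 \;\le\; I_i(\tilde f)^{2/(1+\rho^2)}.
\]
Summing over $i$ and bounding $I_i(\tilde f)^{2/(1+\rho^2)} = I_i(\tilde f)\cdot I_i(\tilde f)^{(1-\rho^2)/(1+\rho^2)} \le M^{(1-\rho^2)/(1+\rho^2)}\, I_i(\tilde f)$ yields
\[
\sum_{S \neq \emptyset} |S|\,\rho^{2|S|}\, \hat{\tilde f}(S)^2 \;\le\; \rho^2\, M^{(1-\rho^2)/(1+\rho^2)} \, I(\tilde f).
\]

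Finally I would bound $\mathrm{Var}(\tilde f)$ by splitting the Fourier spectrum at a degree threshold $k$. The high-degree tail is controlled trivially by $\sum_{|S| > k} \hat{\tilde f}(S)^2 \le I(\tilde f)/(k+1)$, since each such term contributes at least $k+1$ to $I(\tilde f)$. For the low-degree part, the inequality $\hat{\tilde f}(S)^2 \le |S|\, \rho^{-2k}\,\rho^{2|S|}\, \hat{\tilde f}(S)^2$, which follows from $|S|\ge 1$ and $\rho^{-2|S|} \le \rho^{-2k}$ for $|S|\le k$, combined with the previous display, gives $\sum_{0 < |S| \le k} \hat{\tilde f}(S)^2 \le \rho^{2-2k}\, M^{(1-\rho^2)/(1+\rho^2)}\, I(\tilde f)$. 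Adding the two pieces produces
\[
\mathrm{Var}(\tilde f) \;\le\; I(\tilde f) \left( \rho^{2-2k}\, M^{(1-\rho^2)/(1+\rho^2)} + \tfrac{1}{k+1} \right).
\]
The proof is completed by optimizing the free parameters $\rho \in [0,1]$ and $k \in \mathbb{N}$: the natural choice is $\rho^2 = 1/3$, which makes the $M$-exponent equal to $1/2$ and $\rho^{-2}=3$, and then $k$ is chosen in terms of $\tilde I(f)$ so that the high- and low-degree terms are roughly balanced; rearranging and squaring yields the stated form $\max_i I_i(f) \ge 9\cdot 9^{-\tilde I(f)}/\tilde I(f)^2$. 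The main obstacle is the careful bookkeeping of constants in this final optimization to land on exactly the displayed inequality rather than a looser variant of the form $\max_i I_i(f) \ge c^{-\tilde I(f)}$; the analytic content of the proof is essentially exhausted by the two displayed inequalities above.
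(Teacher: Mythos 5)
This statement is not proved in the paper at all: it is quoted as a black box from Section 9.6 of O'Donnell's book, so the only question is whether your sketch actually establishes the stated inequality. Your analytic core is the standard one and is correct: passing to $\pm1$ notation, applying $\|T_\rho \partial_i \tilde f\|_2 \le \|\partial_i \tilde f\|_{1+\rho^2}$ to the $\{-1,0,1\}$-valued derivatives, and summing over $i$ to get $\sum_{S\neq\emptyset}|S|\rho^{2|S|}\hat{\tilde f}(S)^2 \le \rho^2 M^{(1-\rho^2)/(1+\rho^2)} I(\tilde f)$ with $M=\max_i I_i$ is exactly the right first half. The gap is in your endgame. The degree-splitting bound $\mathrm{Var}(\tilde f) \le I(\tilde f)\bigl(\rho^{2-2k}M^{1/2} + \tfrac{1}{k+1}\bigr)$ (at $\rho^2=1/3$) provably cannot be optimized to the stated inequality: to make the tail term $\tilde I/(k+1)$ strictly less than $1$ you must take $k+1 > \tilde I$ (say $k+1 \approx \alpha\tilde I$ with $\alpha>1$), which after isolating $M^{1/2}$ and squaring yields $M \gtrsim \tilde I^{-2} 9^{-\alpha\tilde I}$; with the natural choice $k+1=2\tilde I$ this is $M \ge \tfrac{81}{4\tilde I^2}81^{-\tilde I}$, and letting $\alpha\to 1$ kills the leading constant. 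So what you describe as "bookkeeping of constants" is actually a structural loss of a factor in the exponent: no choice of $k$ in your scheme reaches $9^{-\tilde I(f)}$, only a version with a strictly larger base.

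The fix is to discard the spectral split entirely and use convexity instead, which is how the quoted constant arises. From your own summed inequality at $\rho^2=1/3$, using $|S|\ge 1$ and Jensen's inequality for the convex function $t\mapsto 3^{-t}$ with weights $\hat{\tilde f}(S)^2/\mathrm{Var}(\tilde f)$ over $S\neq\emptyset$,
\[
\tfrac{1}{3}\,M^{1/2} I(\tilde f)\;\ge\;\sum_{S\neq\emptyset}|S|\,3^{-|S|}\hat{\tilde f}(S)^2\;\ge\;\sum_{S\neq\emptyset}3^{-|S|}\hat{\tilde f}(S)^2\;\ge\;\mathrm{Var}(\tilde f)\cdot 3^{-I(\tilde f)/\mathrm{Var}(\tilde f)},
\]
so $M^{1/2}\ge \frac{3\cdot 3^{-\tilde I(f)}}{\tilde I(f)}$ and squaring gives exactly $\max_i I_i(f)\ge \frac{9}{\tilde I(f)^2}\,9^{-\tilde I(f)}$ (here $\mathrm{Var}(\tilde f)=4\mu(1-\mu)$ and $I(\tilde f)=I(f)$, so $I(\tilde f)/\mathrm{Var}(\tilde f)=\tilde I(f)$, and nonconstancy guarantees $\mathrm{Var}(\tilde f)>0$). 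With this one replacement your argument becomes a complete proof of the statement as quoted, and it coincides with the proof in O'Donnell's book; for the purposes of this paper even your weaker $81^{-\tilde I}$-type bound would suffice, since Lemma 1.6 only uses the KKL bound qualitatively, but it would not be the inequality as stated.
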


The combinatorial tool is the classical \emph{shifting operators} $\s_{ST}$, introduced by Erd\H{o}s, Ko, and Rado~\cite{EKR} and developed by Daykin~\cite{Daykin} and others.

For $\mathbf{x} \in \{0,1\}^n$ and $S \subset \left[n\right]$, we write $\mathbf{x}_{S}=1$ if $x_{i}=1$ for all $i\in S$. Similarly, we write $\mathbf{x}_{S}=0$ if $x_{i}=0$ for all $i\in S$. We also write $1_{S} \in \{0,1\}^n$ for the indicator vector of $S$ (i.e., $1_{S}(i)=1$ if and only if $i \in S$).
\begin{defn}
Let $f\colon\left\{0,1\right\} ^{n} \rightarrow \{0,1\}$ be a Boolean function, and let $S,T\subseteq\left[n\right]$ be disjoint sets. The `shifted function'
$\s_{ST}\left(f\right)$ is defined by setting
\[
\s_{ST}\left(f\right)\left(\mathbf{x}\right):=\begin{cases}
f\left(\mathbf{x}\right)\wedge f\left(\mathbf{x}\oplus1_{S\cup T}\right) & \mbox{if }\mathbf{x}_{S}=1\mbox{ and }\mathbf{x}_{T}=0\\
f\left(\mathbf{x}\right)\vee f\left(\mathbf{x}\oplus1_{S\cup T}\right) & \mbox{if }\mathbf{x}_{T}=1\mbox{ and }\mathbf{x}_{S}=0\\
f\left(\mathbf{x}\right) & \mbox{otherwise.}
\end{cases}
\]
\end{defn}
A more intuitive definition of the shifting operator $\s_{ST}$ is as follows. Write $f=1_{A}$ for $A \subset \{0,1\}^n$. The operator $\s_{ST}$ takes all elements $\mathbf{x}\in A$ such that $\mathbf{x}_{S}=1$, $\mathbf{x}_{T}=0$, and $\mathbf{x}\oplus1_{S\cup T}\notin A$, and replaces them with $\mathbf{x}\oplus1_{S\cup T}$. All other elements of $A$ are left unchanged.

The shifting operators will be useful for us due to the following well-known Lemma. 
\begin{lem}
\label{lem:shifting}Let $f\colon\left\{ 0,1\right\} ^{n}\to\left\{ 0,1\right\} $
be a Boolean function of measure $\mu\left(f\right)\le\frac{1}{2}$.
Write
\[
f^{0}=\s_{\varnothing\left\{ 1\right\} }\circ\s_{\varnothing\left\{ 2\right\} }\circ\cdots\circ\s_{\varnothing\left\{ n\right\} }\left(f\right),
\]

\[
f^{1}=\s_{\left\{ n\right\} \left\{ 1\right\}}\circ\s_{\left\{ n-1\right\} \left\{ 1\right\}}\circ\cdots\circ\s_{\left\{ 2\right\} \left\{ 1\right\}}\left(f^{0}\right),
\]

\[
\vdots
\]

\[
f^{n}=\s_{\left\{ n,\ldots,2\right\} \left\{ 1\right\}}\left(f^{n-1}\right).
\]
Then:
\begin{itemize}
\item $I_{i}\left(f^{n}\right) \le I_{i}\left(f\right)$ for any $i\ge2$.
\item $I\left(f^{n}\right)\le I\left(f\right).$
\item The function $f^n$ satisfies $f^n(0,x_2,x_3,\ldots,x_n)=0$ for all $x_2,\ldots,x_n$.
\end{itemize}
\end{lem}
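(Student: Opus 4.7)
The plan is to establish the three claims separately, with the first two being standard and the third being the crux.

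Claims~1 and~2 will follow from the general fact that a single shifting operator $\s_{ST}$ never increases the influence $I_i(f)$ of any coordinate $i \in [n]$. The key tool is the elementary Boolean inequality
\[
\mathbf{1}[a \wedge b \neq c \wedge d] + \mathbf{1}[a \vee b \neq c \vee d] \le \mathbf{1}[a \neq c] + \mathbf{1}[b \neq d] \qquad (a,b,c,d \in \{0,1\}),
\]
which bounds the contribution to $I_i$ from the direction-$i$ edges paired by the shift. Iterating this through the $\sigma$-operators composing $f^n$ gives both claims.

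For Claim~3, first observe that after applying $f^0 = \s_{\emptyset\{1\}} \circ \cdots \circ \s_{\emptyset\{n\}}(f)$, the set $A^0 = \{\mathbf{x} : f^0(\mathbf{x}) = 1\}$ is a full up-set. Each $\s_{\emptyset\{i\}}$ directly establishes monotonicity in coordinate $i$, and a short case analysis on the definition of $\sigma$ shows that a later shift $\s_{\emptyset\{j\}}$ (with $j \neq i$) preserves monotonicity in coordinate $i$. Since $\mu(A^0) = \mu(f) \le 1/2 < 1$ and $A^0$ is an up-set, we must have $\mathbf{0} \notin A^0$.

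For the third bullet, the subsequent shifts $\s_{S\{1\}}$ (with $\emptyset \neq S \subseteq \{2,\ldots,n\}$) can only move elements from the $0$-slice $\{\mathbf{x} : x_1 = 0\}$ to the $1$-slice, never the reverse; in particular, $A_1^n \supseteq A_1^0$. We show that after the full sequence the $0$-slice is empty. Suppose, for contradiction, that some $\mathbf{x}^* \in A^n$ has $x_1^* = 0$, with $S^* = \{i \ge 2 : x_i^* = 1\} \neq \emptyset$. For $\mathbf{x}^*$ to have survived each shift $\s_{S'\{1\}}$ with $\emptyset \neq S' \subseteq S^*$, its target $\mathbf{x}^* \oplus \mathbf{1}_{S' \cup \{1\}}$ must have been in the set at the moment of that shift; by the $1$-slice preservation, that target lies in $A^n$. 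Combining this with the up-set property of $A^0$ (which gives $\mathbf{x}^* \oplus e_1 \in A_1^n$), we deduce that the entire sub-cube $\{\mathbf{y} : y_1 = 1,\ \{i \ge 2 : y_i = 1\} \subseteq S^*\}$ of $2^{|S^*|}$ elements is contained in $A_1^n$. Performing the same bookkeeping over all elements of $A_0^n$ and comparing with the measure bound $|A^n| \le 2^{n-1}$ yields the desired contradiction.

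The main obstacle is the counting in the last step: one must show that the sub-cubes forced by distinct elements of $A_0^n$ assemble in a way that pushes $|A^n|$ past $2^{n-1}$ whenever $A_0^n$ is non-empty, which requires exploiting the fact that the shifts are performed in order of increasing $|S|$ and the up-set structure of $A^0$ to avoid wasteful double counting.
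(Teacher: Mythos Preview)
The paper does not prove this lemma at all; it is stated as ``well-known'' and used as a black box. So there is no paper argument to compare against, and the question is simply whether your plan is sound.

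Your treatment of the first two bullets is standard and correct: the pointwise Boolean inequality you quote yields $I_i(\s_{ST} f)\le I_i(f)$ for every coordinate $i$, and iterating through the composition gives both items.

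For the third bullet, your setup is right. You correctly argue that $A^0$ is an up-set, that the shifts $\s_{S\{1\}}$ can only move points from the $0$-slice to the $1$-slice, and that any survivor $(0,\mathbf{x}')\in A^n_0$ forces the down-cube $D(\mathbf{x}')=\{(1,\mathbf{y}'):\mathbf{y}'\le\mathbf{x}'\}$ into $A^n_1$. However, the final step---the counting you flag as ``the main obstacle''---is a genuine gap, not a bookkeeping detail. For a single survivor, $D(\mathbf{x}')$ together with the up-closure $U(\mathbf{x}')\subseteq A^0_1\subseteq A^n_1$ accounts for only $2^{|S^*|}+2^{n-1-|S^*|}-1$ elements of $A^n_1$, which is far below $2^{n-1}$ for intermediate $|S^*|$; and the down-cubes coming from distinct survivors overlap heavily, so there is no evident disjointness to sum. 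The hints you give (increasing $|S|$, up-set structure of $A^0$) do not by themselves produce a counting that closes.

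What actually completes the argument is one further structural fact that your plan does not isolate: the $0$-slice $A^n_0$ is itself an up-set in $\{0,1\}^{n-1}$ (equivalently, the sequence of shifts, applied in increasing $|S|$, preserves monotonicity). Once this is established, if some $\mathbf{x}'\in A^n_0$ then the entire $U(\mathbf{x}')$ lies in $A^n_0$, and applying your down-cube observation to every $\mathbf{z}'\ge\mathbf{x}'$ gives
\[
A^n_1\ \supseteq\ \bigcup_{\mathbf{z}'\ge\mathbf{x}'} D(\mathbf{z}')\ =\ \{0,1\}^{n-1},
\]
since for any $\mathbf{y}'$ one may take $\mathbf{z}'=\mathbf{x}'\vee\mathbf{y}'$. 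Hence $|A^n|\ge 2^{n-1}+|A^n_0|>2^{n-1}$, the desired contradiction. Proving that $A^n_0$ remains an up-set is the real content here; your plan should name and prove it explicitly rather than fold it into ``counting.''
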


Now we are ready to present the proof of Lemma~\ref{thm:Max-Inf-large}. For convenience, we recall the statement of the Lemma.

\medskip \noindent \textbf{Lemma~\ref{thm:Max-Inf-large}.}
There exists an absolute constant $C_1$ such that the following holds. Let $M,\delta>0$ satisfy $M/\delta>C$, and let
$\mu \in \left(0,1-\delta\right)$. Let $f\colon\left\{ 0,1\right\} ^{n}\to\left\{ 0,1\right\} $ be a Boolean function with $\mathbb{E}[f]=\mu$, and suppose that $I(f)\le2\mu\left(\log\left(1/\mu\right)+M\right)$.
Then
\[
\max_{i\in\left[n\right]}\left\{I_{i}\left(f\right)\right\} \ge2^{-C_1 M/\delta}\mu.
\]

\begin{proof}
Suppose first that $\mu\ge\frac{1}{4}$. In this case, we have
\[
\tilde{I}(f)=\frac{I(f)}{4\mathbb{E}[f](1-\mathbb{E}[f])} \leq \frac{4}{3} \cdot (M/\delta),
\]
and thus, the assertion follows immediately from Theorem~\ref{Thm:KKL}.

\medskip

Now suppose that $\mu\le\frac{1}{4}$. Let $i \in \mathbb{N}$ be such that $\mu \in [2^{-1-i},2^{-i})$. The proof will proceed by induction on $i$. Let $f^{n}$ be as in Lemma \ref{lem:shifting}, and define $f_{1}^{n},f_{0}^{n}\colon\left\{0,1\right\}^{n-1} \rightarrow \{0,1\}$
by $f_{1}^{n}\left(\mathbf{x}\right)=f^{n}\left(1,\mathbf{x}\right)$ and $f_{0}^{n}\left(\mathbf{x}\right)=f^{n}\left(0,\mathbf{x}\right)$.
Recall that by Lemma \ref{lem:shifting}, we have $f_{0}^{n}\left(\mathbf{x}\right) \equiv 0$.
Thus,
\[
2\mu\left(\log(1/\mu)+M\right)\ge I\left(f^{n}\right)=\frac{1}{2}I\left(f_{1}^{n}\right)+\frac{1}{2}I\left(f_{0}^{n}\right)+I_{1}\left(f^{n}\right)=2\mu+\frac{1}{2}I\left(f_{1}^{n}\right),
\]
where the leftmost equality follows from~\eqref{Eq:InfInd}. Write $\mu_{1}=2\mu=\mu\left(f_{1}^{n}\right)$. We obtain
\[
I\left(f_{1}^{n}\right) \leq 2\mu_{1}\left(\log (1/\mu_{1})+M\right).
\]
By the induction hypothesis, the maximal influence of $f_{1}^{n}$ is at least $2^{-C_1M/\delta}\mu_{1}$. This implies that $I_{i}\left(f^{n}\right)\ge2^{-C_1M/\delta}\frac{\mu_{1}}{2}$ for some $i\ge2$. By Lemma \ref{lem:shifting}, it follows that $I_{i}\left(f\right)\ge2^{-C_1M/\delta}\mu$.
This completes the proof.
\end{proof}

\subsection{The effect of an influential coordinate on the restricted functions in the induction process}

In this subsection we suppose w.l.o.g. that $I_{1}\left(f\right)$ is the maximal influence of $f$. By Lemma~\ref{thm:Max-Inf-large}, $I_1(f)$ is `not very small'. We show that in this case, when we perform the induction process on the first coordinate (as described in Section~\ref{sec:Prelim}), the influences $I(f_1)$ and $I(f_0)$ are, on average, `closer to the minimum' than $I(f)$. On the intuitive level, this is apparent in view of~\eqref{Eq:InfInd}, but we need a quantitative result. The `advantage' we obtain here will be crucial in the inductive step of Theorem~\ref{thm:Main simplified}, both in the case where $\mu_0$ is small (where it will compensate for a looser approximation, resulting from approximating $f_0$ by the zero function), and in the case where $\mu_0,\mu_1$, and $I_1(f)$ are all large (where it will allow to bound the number of steps that double the size of the approximating DNF).

\medskip

The following lemma was proved by Ellis~\cite{Ellis}.
\begin{lem}
\label{lem:Win if the influence is medium}There exists an absolute constant $c$ such that the following holds. Let $\zeta>0$ and let $f\colon\left\{ 0,1\right\} ^{n}\to\left\{ 0,1\right\}$ be a Boolean function. If $\min(I_{1}\left(f\right),\mu_0,\mu_1) \ge \zeta\mu$, then
\[
2M\mu-M_{1}\mu_{1}-M_{0}\mu_{0}\ge c\zeta\mu.
\]
\end{lem}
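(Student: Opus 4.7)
The plan is to reduce the inequality to an elementary one-variable estimate, using the definitions of $M, M_0, M_1$ together with the influence decomposition~\eqref{Eq:InfInd}.

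The first step is to unpack the definitions. From $I(f) = 2\mu(\log(1/\mu)+M)$ I read off $2M\mu = I(f) - 2\mu\log(1/\mu)$, and similarly $2M_i\mu_i = I(f_i) - 2\mu_i\log(1/\mu_i)$ for $i\in\{0,1\}$. Subtracting, applying $2I_1(f) = 2I(f) - I(f_1) - I(f_0)$, and regrouping the entropy terms using $2\mu = \mu_0 + \mu_1$, a short calculation produces the clean identity
\[
2M\mu - M_1\mu_1 - M_0\mu_0 \;=\; I_1(f) - \mu H(t),
\]
where $t := (\mu_1 - \mu_0)/(2\mu) \in [-1,1]$ and $H(t) := (1+t)\log(1+t) + (1-t)\log(1-t)$. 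So it suffices to prove $I_1(f) - \mu H(t) \ge c\zeta\mu$.

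Next I would record three facts. The function $H$ is even and convex on $[-1,1]$ with $H(0)=0$ and $H(\pm 1) = 2$; in particular the chord bound gives $H(s) \le 2|s|$, and $H(s)/|s|$ is strictly increasing in $|s|$. For any Boolean function one has $|\mu_1 - \mu_0| \le I_1(f)$, i.e.\ $2\mu|t| \le I_1(f)$. Finally, the hypothesis $\mu_0,\mu_1 \ge \zeta\mu$ is exactly $|t| \le 1-\zeta$.

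The heart of the proof is a case analysis on $|t|$. When $|t| \le \zeta/4$, the chord bound yields $\mu H(t) \le 2\mu|t| \le \zeta\mu/2$, so $I_1(f) - \mu H(t) \ge \zeta\mu/2$. When $\zeta/4 \le |t| \le 1-\zeta$, I apply $I_1(f) \ge 2\mu|t|$ to reduce the target to $\mu\cdot g(|t|)$, where $g(s) := 2s - H(s)$. The function $g$ is continuous and non-negative on $[0,1]$, vanishes only at the endpoints, and is unimodal, so its minimum on $[\zeta/4, 1-\zeta]$ is attained at one of the two endpoints. At $s = \zeta/4$ the monotonicity of $H(s)/s$ gives $g(\zeta/4) = \Theta(\zeta)$ at once. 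At $s = 1-\zeta$ an explicit expansion of $(2-\zeta)\log(2-\zeta) + \zeta\log\zeta$ shows $g(1-\zeta) \ge c'\zeta$ for some absolute $c'$ (the degenerate range $\zeta > 4/5$ makes this case vacuous). Combining the two cases yields the claim with some universal $c > 0$.

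The main obstacle is the endpoint $|t|\to 1$: because $H'$ blows up at $\pm 1$, the chord bound $H(s) \le 2|s|$ is asymptotically tight there, so one cannot just use a pointwise inequality to beat it. The separation $|t| \le 1-\zeta$ coming from $\mu_0,\mu_1 \ge \zeta\mu$ is precisely what enables a linear-in-$\zeta$ lower bound on $g(1-\zeta)$ and makes the case analysis go through.
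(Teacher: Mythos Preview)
Your proof is correct. The identity $2M\mu - M_1\mu_1 - M_0\mu_0 = I_1(f) - \mu H(t)$ follows exactly as you describe, the bound $|\mu_1-\mu_0|\le I_1(f)$ is the standard $|\mathbb{E}[f_1-f_0]|\le\mathbb{E}|f_1-f_0|$, and the case analysis on $|t|$ goes through: in the small-$|t|$ regime the hypothesis $I_1(f)\ge\zeta\mu$ dominates the chord bound, while in the large-$|t|$ regime the unimodality of $g(s)=2s-H(s)$ reduces matters to the two endpoints, each of which gives a contribution linear in $\zeta$ by an elementary estimate.

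As for comparison with the paper: the paper does not prove this lemma at all. It is stated with the attribution ``The following lemma was proved by Ellis~\cite{Ellis}'' and used as a black box. Your argument therefore supplies a complete, self-contained elementary proof where the paper only cites one. The core observation in Ellis's paper is essentially the same identity you derive (the quantity $I_1(f)+\mu_0\log(1/\mu_0)+\mu_1\log(1/\mu_1)-2\mu\log(1/\mu)$ appears there explicitly), so your approach is in the same spirit as the original source, just packaged cleanly via the substitution $t=(\mu_1-\mu_0)/(2\mu)$ and the single-variable function $H$.

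One stylistic remark: the phrase ``$g(\zeta/4)=\Theta(\zeta)$ at once'' from monotonicity of $H(s)/s$ is a slight overstatement, since you only need the \emph{lower} bound $g(\zeta/4)\ge c''\zeta$, and that follows because $H(s)/s$ is increasing with $H(1/4)/(1/4)<2$, giving $g(s)/s\ge 2-4H(1/4)>0$ on $(0,1/4]$. This is what you mean, but it would read more cleanly if stated as a one-sided bound.
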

We prove a similar result in the case where $\mu_{0}$ (or, equivalently, $\mu_1$) is small.
\begin{lem}
\label{lem:Win if the influence is super large} Let $C_3>0$. Suppose that
\[
\min\left\{ \mu_{0},\mu_{1}\right\} \le2^{-C_3}\mu.
\]
Then
\[
2M\mu-M_{1}\mu_{1}-M_{0}\mu_{0}\ge\left(C_3-1\right) \min(\mu_{0},\mu_1).
\]
\end{lem}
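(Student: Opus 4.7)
The plan is to express $2M\mu - M_1\mu_1 - M_0\mu_0$ in a clean form by unwinding the definitions, and then to exploit the imbalance $\mu_0 \ll \mu_1$ together with the elementary lower bound $I_1(f)\ge|\mu_1-\mu_0|$.

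First, I would use the definitions $I(f)=2\mu(\log(1/\mu)+M)$ and $I(f_i)=2\mu_i(\log(1/\mu_i)+M_i)$ together with the splitting identity \eqref{Eq:InfInd} to obtain, after cancelling the $I(f)$ contributions,
\begin{equation*}
2M\mu - M_1\mu_1 - M_0\mu_0 \;=\; I_1(f) \;+\; \mu_0\log(\mu/\mu_0) \;+\; \mu_1\log(\mu/\mu_1),
\end{equation*}
where (as throughout the paper) the logarithm is base $2$, so that the isoperimetric normalization gives $\log 2 = 1$. Assume WLOG that $\mu_0=\min(\mu_0,\mu_1)\le 2^{-C_3}\mu$, so that $\mu_1=2\mu-\mu_0\ge (2-2^{-C_3})\mu$.

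Next I would bound the negative part. Since $I_1(f)$ is the probability that $f_0\ne f_1$, the coupling inequality $I_1(f)\ge|\mu_1-\mu_0|=\mu_1-\mu_0=2\mu-2\mu_0$ holds unconditionally. For the large-mass entropy term, writing $\log(\mu/\mu_1) = -\log(2-\mu_0/\mu)$ and noting $\log_2(2-\mu_0/\mu)\le 1$ yields
\begin{equation*}
\mu_1\log(\mu/\mu_1) \;\ge\; -\mu_1 \;=\; -(2\mu-\mu_0).
\end{equation*}
Substituting both estimates and simplifying,
\begin{equation*}
2M\mu - M_1\mu_1 - M_0\mu_0 \;\ge\; (2\mu-2\mu_0) - (2\mu-\mu_0) + \mu_0\log(\mu/\mu_0) \;=\; \mu_0\bigl(\log(\mu/\mu_0)-1\bigr).
\end{equation*}

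Finally, the hypothesis $\mu_0\le 2^{-C_3}\mu$ gives $\log(\mu/\mu_0)\ge C_3$, so the right-hand side is at least $(C_3-1)\mu_0=(C_3-1)\min(\mu_0,\mu_1)$, as claimed. The argument is essentially a one-line computation once the right identity is in place; there is no real obstacle, but one delicate point worth double-checking is that the constant comes out exactly as $C_3-1$ rather than being further attenuated. This depends on the base-$2$ convention (so that $\log 2=1$), and on the fact that the bound $\log_2(2-x)\le 1$ is tight at $x=0$, so no slack is lost in the worst case $\mu_0\to 0$.
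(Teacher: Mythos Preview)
Your proof is correct and takes essentially the same approach as the paper's: both unwind the definitions via~\eqref{Eq:InfInd} to reach the identity $2M\mu - M_1\mu_1 - M_0\mu_0 = I_1(f) + \mu_0\log(\mu/\mu_0) + \mu_1\log(\mu/\mu_1)$, then use $I_1(f)\ge\mu_1-\mu_0$ and the trivial bound $\mu_1\le 2\mu$ (your $\log_2(2-x)\le 1$) to reduce to $\mu_0(\log(\mu/\mu_0)-1)\ge(C_3-1)\mu_0$. The paper's display is organized a bit differently but the ingredients and constants match line for line.
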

\begin{proof}
We assume w.l.o.g. that $\mu_{0}\le\mu_{1}$. The lemma follows from a straightforward computation:
\begin{align*}
2M\mu-M_{1}\mu_{1}-M_{0}\mu_{0} & =I(f)-2\mu\log\frac{1}{\mu}-\frac{1}{2}\left(I(f_1)-2\mu_{1}\log\frac{1}{\mu_{1}}\right)\\
 & -\frac{1}{2}\left(I(f_0)-2\mu_{0}\log\frac{1}{\mu_{0}}\right)\\
 & =I_{1}(f)+\mu_{1}\log\frac{1}{\mu_{1}}+\mu_{0}\log\frac{1}{\mu_{0}}-2\mu\log\frac{1}{\mu}\\
 & \ge\mu_{1}-\mu_{0}+\mu_{1}\log\frac{1}{\mu_{1}}+\mu_{0}\log\frac{1}{\mu_{0}}-2\mu\log\frac{1}{\mu}\\
 & =\mu_{1}\log\frac{2}{\mu_{1}}+\mu_{0}\log\frac{1}{2\mu_{0}}-2\mu\log\frac{1}{\mu}\\
 & \ge\mu_{1}\log\frac{1}{\mu}+\mu_{0}\log\frac{1}{2\mu_{0}}-2\mu\log\frac{1}{\mu}\\
 & =\mu_{0}\left(\log\frac{1}{2\mu_{0}}-\log\frac{1}{\mu}\right)\\
 & =\mu_{0}\log\left(\frac{\mu}{2\mu_{0}}\right)\ge\left(C_3-1\right)\mu_{0}.
\end{align*}
\end{proof}

\section{Proof of the Main Theorem}
\label{sec:Proof}

\begin{defn}
Let $\mu\in\left(0,1\right),\epsilon>0$, and $n \in \mathbb{N}$. We define $\tilde{s}\left(\mu,\epsilon,n\right)$ to be the smallest integer
such that the following holds. Let $f\colon\left\{ 0,1\right\} ^{n}\to\left\{ 0,1\right\} $
be a Boolean function, and write
\[
I\left(f\right)=2\mu\left(\log\left(\frac{1}{\mu}\right)+M\right).
\]
Then $f$ can be $\epsilon M\mu$-approximated by a DNF of size $\tilde{s}\left(\mu,\epsilon,n\right)$.

We also write $\tilde{s}\left(\epsilon\right)$ for the supremum of $\tilde{s}\left(\mu,\epsilon,n\right)$ over all $\mu\in\left(0,1\right)$,
and all $n\in\n$.
\end{defn}

It is clear that in order to prove Theorem \ref{thm:Main simplified}, it is sufficient to show that
\begin{equation}\label{Eq:Main1}
\tilde{s}\left(\epsilon\right) \leq 2^{2^{O\left(\frac{1}{\epsilon}\right)}}
\end{equation}
for any $\epsilon>0$. Throughout this section, we assume w.l.o.g that $I_1(f)$ is the maximal influence of $f$, and that $\mu_{0}\le\mu_{1}$.

\medskip

First, we show that one can assume w.l.o.g. that $\epsilon<C_4$ for a constant $C_4$. This follows immediately from the stability version of Theorem~\ref{Thm:Iso-weak} proved by the first author~\cite{Ellis}.
\begin{thm}[\cite{Ellis}]
\label{thm:Ellis} There exist an absolute constant $c'>0$ such that the following holds. Let $f\colon\left\{ 0,1\right\} ^{n}\to\left\{ 0,1\right\} $
be a Boolean function with $\mathbb{E}[f]=\mu$, and let $\epsilon>0$. Suppose that
\[
I(f)\le \mu\left(\log(1/\mu)+c'\epsilon\log\left(1/\epsilon\right)\right).
\]
Then $f$ can be $\epsilon \mu$-approximated by a subcube.
\end{thm}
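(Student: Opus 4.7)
My strategy is to mimic the inductive proof of the edge isoperimetric inequality (Theorem~\ref{Thm:Iso-weak}) while carefully tracking the slack. First I reduce to the case where $f$ is monotone and left-compressed: monotonization preserves $\mathbb{E}[f]$ and cannot increase $I(f)$, and iterating the shifting operators from Lemma~\ref{lem:shifting} further preserves $\mu$, does not increase $I(f)$, and makes the extremal subcubes canonical. Consequently, an $\epsilon\mu$-approximation of the reduced function by the canonical subcube $\{x:x_1=\cdots=x_d=1\}$ lifts (by inverting the shifts, which are measure-preserving involutions on pairs of antipodal points) to an $\epsilon\mu$-approximation of the original $f$ by the corresponding preimage subcube.

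With $f$ now monotone and left-compressed, I track the \emph{isoperimetric defect} $\Delta(f) := I(f) - 2\mu\log(1/\mu)$ along the recursion on $n$. Monotonicity gives $I_1(f) = \mu_1 - \mu_0$, and shifting ensures $\mu_1 \geq \mu_0$. Combining these with~\eqref{Eq:InfInd} and the calculation from the proof of Lemma~\ref{lem:Win if the influence is super large} yields an additive decomposition
\[
\Delta(f) \;\geq\; \tfrac{1}{2}\Delta(f_1) + \tfrac{1}{2}\Delta(f_0) + \Phi(\mu_0,\mu_1),
\]
where the nonnegative ``concavity defect'' satisfies $\Phi(\mu_0,\mu_1) \gtrsim \mu_0 \log(\mu/\mu_0)$ and vanishes iff $\mu_0=0$. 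Following the recursion by always descending into the heavier branch $f_1$ determines a candidate subcube $C$, while the mass discarded at level $\ell$ is exactly $\mu_0^{(\ell)}$.

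\textbf{Quantitative conclusion.} To finish, it suffices to show $\sum_\ell \mu_0^{(\ell)} \leq \epsilon\mu$. The recursion need run for only $O(\log(1/\epsilon))$ levels: after that many doublings, the current measure has grown past $\epsilon$, and one can either stop (absorbing any leftover discrepancy into the error budget) or invoke Theorem~\ref{Thm:LOL} on the residual function, whose measure is now bounded away from $0$. The inequality $\mu_0^{(\ell)} \lesssim \Delta(f^{(\ell)})/\log(\mu^{(\ell)}/\mu_0^{(\ell)})$ from the previous paragraph, together with the hypothesis $\Delta(f) \le c'\mu\epsilon\log(1/\epsilon)$ and the fact that the $\Delta(f^{(\ell)})$ sum telescopically back to $\Delta(f)$, then gives $\sum_\ell \mu_0^{(\ell)} = O(c'\mu\epsilon) \leq \epsilon\mu$ for $c'$ sufficiently small.

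\textbf{Main obstacle.} The most delicate point is controlling the levels at which $\mu_0^{(\ell)}/\mu^{(\ell)}$ is \emph{not} tiny, so that the logarithm $\log(\mu^{(\ell)}/\mu_0^{(\ell)})$ in the denominator is only $O(1)$ and the bound above loses a factor of $\log(1/\epsilon)$. One must show that only $O(1)$ such ``bad'' levels can occur before the recursion can be terminated via Theorem~\ref{Thm:LOL} on the heavier residual, and this matching of the $\log(1/\epsilon)$ factor in the hypothesis to the effective depth of the recursion is what produces the sharp rate $\epsilon\log(1/\epsilon)$ rather than a weaker $\epsilon$ in the isoperimetric hypothesis.
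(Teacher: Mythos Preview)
This theorem is not proved in the present paper; it is quoted from Ellis~\cite{Ellis} as a black box and immediately applied to obtain Lemma~\ref{lem:Ellis-1}. So there is no in-paper proof against which to compare your proposal.

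On its own merits, your sketch has two genuine gaps. First, the reduction via shifting does not lift back the way you claim. The operators $\mathcal{S}_{ST}$ of Lemma~\ref{lem:shifting} are not invertible and are certainly not ``measure-preserving involutions on pairs of antipodal points'': many distinct sets compress to the same shifted set, so an $\epsilon\mu$-approximation of the compressed function by a canonical subcube does not automatically pull back to an approximation of the original $f$ by \emph{any} subcube. Handling this transfer is one of the main technical points in Ellis's actual argument in~\cite{Ellis}, and it cannot be dispatched with a one-line remark. Second, your plan to ``invoke Theorem~\ref{Thm:LOL} on the residual function'' calls on a strictly later and stronger stability theorem (from~\cite{EKL16a}) whose content already subsumes what you are trying to show; and even granting it, Theorem~\ref{Thm:LOL} only yields closeness to a set weakly isomorphic to $\mathcal{L}_\mu$, which is not in general a subcube, so you would still owe a conversion step.
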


\begin{lem}
\label{lem:Ellis-1}There exists an absolute constant $C_4$ such that for all $\epsilon>C_4$,
\[
\tilde{s}\left(\epsilon\right)=1.
\]
\end{lem}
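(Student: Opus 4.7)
The plan is to use Ellis's stability theorem (Theorem~\ref{thm:Ellis}) as a single-subcube oracle and to calibrate its parameter to the excess $M$. A subcube is a DNF of size~$1$, so the task is to $\epsilon M\mu$-approximate $f$ by some subcube. Theorem~\ref{thm:Ellis} applied with parameter $\eta>0$ requires $M\le c'\eta\log(1/\eta)$ (after reading its hypothesis against $I(f)=2\mu(\log(1/\mu)+M)$) and delivers an $\eta\mu$-approximation by a subcube; we are therefore free to choose $\eta$ anywhere in $(0,\epsilon M]$, trading a weaker approximation target for a weaker Ellis hypothesis.

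I would split on the size of $M$ and $\epsilon M$ and set $C_4:=e/c'$. If $M=0$, the equality case of Theorem~\ref{Thm:Iso-weak} forces $f$ to be a subcube and there is nothing to prove. If $0<\epsilon M\le 1/e$, apply Theorem~\ref{thm:Ellis} with $\eta=\epsilon M$; the hypothesis becomes $1\le c'\epsilon\log(1/(\epsilon M))$, which holds because $\log(1/(\epsilon M))\ge 1$ and $\epsilon>1/c'$. If $\epsilon M>1/e$ but $M\le c'/e$, take instead the fixed $\eta=1/e$: the hypothesis $M\le c'/e$ holds by assumption and the resulting $\mu/e$-approximation is automatically an $\epsilon M\mu$-approximation since $\epsilon M\ge 1/e$. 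Finally, if $M>c'/e$, then $\epsilon M\ge \epsilon c'/e\ge 1$, so $\epsilon M\mu\ge\mu$, and one may simply take the single-point subcube $\{x^{*}\}$ for any $x^{*}\in\{x:f(x)=1\}$ (a size-$1$, width-$n$ DNF), incurring error $\mu-2^{-n}\le \mu$; if $f\equiv 0$ the statement is vacuous.

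The only delicate point is the intermediate regime where $\epsilon M$ sits just below~$1$: applying Theorem~\ref{thm:Ellis} with the natural choice $\eta=\epsilon M$ fails there because $\log(1/(\epsilon M))\to 0$ as $\epsilon M\to 1^{-}$, so the Ellis hypothesis $1\le c'\epsilon\log(1/(\epsilon M))$ cannot be maintained uniformly in $M$. The remedy is exactly the middle case above: give up the exact parameter $\epsilon M$ in favour of the fixed $\eta=1/e$. This is permissible because $\epsilon M\ge 1/e$ keeps the coarser approximation good enough, and the matching hypothesis $M\le c'/e$ is automatic on the window $\epsilon M<1$ once $\epsilon\ge e/c'$ (for then $M<1/\epsilon\le c'/e^{2}\le c'/e$). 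The three regimes thus tile all possibilities, and $C_4=e/c'$ works uniformly.
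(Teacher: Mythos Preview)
Your proof is correct and follows essentially the same approach as the paper: invoke Ellis's stability theorem (Theorem~\ref{thm:Ellis}) in the small-$M$ regime and fall back on a trivial size-$\le 1$ approximation once $\epsilon M$ is of order~$1$; the only difference is that you split into finer sub-cases and give an explicit constant $C_4=e/c'$, whereas the paper simply inverts the Ellis bound to a $c'\tfrac{M}{\log(1/M)}\mu$-approximation and then takes $C_4$ ``sufficiently large.'' One harmless slip: in your final paragraph the chain $M<1/\epsilon\le c'/e^{2}\le c'/e$ should read $M<1/\epsilon< c'/e$ (since $C_4=e/c'$, not $e^{2}/c'$), but the conclusion $M<c'/e$ is unaffected.
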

\begin{proof}
Let $\epsilon>C_4$ for $C_4$ to be specified below, and let $f\colon\left\{ 0,1\right\} ^{n}\to\left\{ 0,1\right\} $ be a
Boolean function. Write $I\left(f\right)=2\mu\left(\log\frac{1}{\mu}+M\right)$. We have to show that $f$ can be $\epsilon M\mu$-approximated by a subcube. If $M\epsilon\ge1$, then $f$ can be approximated by the constant 0 function. Thus, we may assume that $M\le\frac{1}{C_4}\le1$,
provided that $C_4\ge1$. By Theorem~\ref{thm:Ellis}, there exists $c'>0$, such that $f$ can be $c'\frac{M}{\log\left(1/M\right)}\mu$-approximated
by a subcube. Hence, $f$ can be $\epsilon M\mu$-approximated by a subcube provided that $C_4$ is sufficiently large. This completes the proof.
\end{proof}

Now we present the main part of the inductive argument.
We show that there exists $C_{5}>0$ such that in any step of the inductive process, one of the following alternatives must occur:
\begin{enumerate}
\item Either there exists some $\mu_{1}\ge\mu$, such that
\[
\tilde{s}\left(\mu,\epsilon,n\right)\le\tilde{s}\left(\mu_{1},\epsilon,n-1\right),
\]

\item Or
\[
\tilde{s}\left(\mu,\epsilon,n\right)\le2\tilde{s}\left(\epsilon+2^{-C_{5}/\epsilon}\right).
\]
\end{enumerate}

This will follow immediately from combination of two claims:
\begin{claim}
\label{Claim: mu-small}There exists $C_{6}>0$
such that the following holds. If $\mu_{0}\le2^{-C_{6}/\epsilon}\mu$,
then $f$ can be $\epsilon M\mu$-approximated by a DNF of size at
most $\tilde{s}\left(\mu_{1},\epsilon,n-1\right)$.
\end{claim}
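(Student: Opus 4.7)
The plan is to split $f$ on the first coordinate, approximate $f_1$ via the inductive definition of $\tilde{s}$, and simply replace $f_0$ by the constant $0$ function --- which is reasonable since by hypothesis $\mu_0\le 2^{-C_6/\epsilon}\mu$ is very small. Concretely, I would let $D_1$ be a DNF of size $\tilde{s}(\mu_1,\epsilon,n-1)$ that $\epsilon M_1\mu_1$-approximates $f_1$ (which exists by definition of $\tilde{s}$), and build $D$ by AND-ing the literal $x_1$ into every term of $D_1$. Then $D$ is a DNF of the same size as $D_1$, and by construction it agrees with $f_1$ on the half $\{x_1=1\}$ and equals $0$ on the half $\{x_1=0\}$. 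Its error is therefore
\[
\Pr[f\neq D]\le\tfrac{1}{2}\bigl(\epsilon M_1\mu_1+\mu_0\bigr),
\]
so the task reduces to showing that the right-hand side is bounded by $\epsilon M\mu$.

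For this last step I would invoke Lemma~\ref{lem:Win if the influence is super large} with $C_3:=C_6/\epsilon$. Since $\mu_0\le 2^{-C_3}\mu$ by assumption, the lemma produces
\[
2M\mu-M_1\mu_1-M_0\mu_0\ge (C_3-1)\mu_0.
\]
Discarding the nonnegative term $M_0\mu_0$ on the left and multiplying through by $\epsilon$ yields
\[
2\epsilon M\mu\ge \epsilon M_1\mu_1+(C_6-\epsilon)\mu_0,
\]
so fixing $C_6$ large enough that $C_6-\epsilon\ge 1$ closes the argument. By Lemma~\ref{lem:Ellis-1} we may restrict to $\epsilon<C_4$, so a universal choice such as $C_6:=C_4+1$ works for every relevant $\epsilon$.

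I do not expect any serious obstacle here: once Lemma~\ref{lem:Win if the influence is super large} is in hand, the real work --- converting the small measure $\mu_0$ into a quantitative surplus in $2M\mu-M_1\mu_1-M_0\mu_0$ --- has already been done, and what remains is bookkeeping plus the calibration of $C_6$ against the absolute constant $C_4$. The only minor care point is the degenerate case where $f_1$ is constant, in which $D_1$ may be taken of size $\le 1$ (or the whole claim becomes trivial); this does not affect the main estimate.
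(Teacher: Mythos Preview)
Your proposal is correct and follows essentially the same approach as the paper: the same construction (approximate $f_1$ by a DNF $D_1$ of size $\tilde{s}(\mu_1,\epsilon,n-1)$, AND the literal $x_1$ into every term, and approximate $f_0$ by zero), the same error bound $\tfrac{1}{2}(\epsilon M_1\mu_1+\mu_0)$, and the same key input, Lemma~\ref{lem:Win if the influence is super large} with $C_3=C_6/\epsilon$. Your final algebra is in fact a bit cleaner than the paper's: by simply dropping the nonnegative term $M_0\mu_0$ and multiplying through by $\epsilon$, you avoid the paper's detour through the substitution $\mu=(\mu_1+\mu_0)/2$ and the auxiliary reduction to $M\le 1/\epsilon$.
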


\begin{claim}
\label{Claim:medium mu_0} Let $C_{6}>0$ be some constant, and suppose
that $\mu_{0}\ge2^{-C_{6}/\epsilon}\mu$. Then $f$ can be $\epsilon M\mu$-approximated
by a DNF of size at most $2\tilde{s}\left(\epsilon+2^{-C_{5}/\epsilon}\right)$,
provided that $C_{5}$ is large enough. \end{claim}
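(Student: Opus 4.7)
My plan is to apply the basic observation of Section~\ref{sec:Prelim} on coordinate~$1$ (which we have assumed carries the maximum influence), with the balanced choice $\epsilon_0=\epsilon_1:=\epsilon'=\epsilon+2^{-C_5/\epsilon}$. By definition of $\tilde{s}$, each restriction $f_b$ ($b\in\{0,1\}$) admits an $\epsilon' M_b\mu_b$-approximating DNF of size at most $\tilde{s}(\epsilon')$; combining these via the observation yields a DNF for $f$ of size at most $2\tilde{s}(\epsilon')$ and approximation error $\tfrac{\epsilon'}{2}(M_1\mu_1+M_0\mu_0)$. The whole claim therefore reduces to showing
\[
\epsilon'(M_1\mu_1+M_0\mu_0)\le 2\epsilon M\mu.
\]

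I would establish this by feeding one lemma of Section~\ref{sec:Lemmas} into the other. First, we may assume $M\epsilon<1$, since otherwise $\epsilon M\mu\ge\mu$ and the constant-zero DNF trivially approximates $f$. Working in the main regime $\mu\le\tfrac12$, Lemma~\ref{thm:Max-Inf-large} with $\delta=\tfrac12$ gives $I_1(f)\ge 2^{-2C_1 M}\mu\ge 2^{-2C_1/\epsilon}\mu$; together with $\mu_1\ge\mu$ (since $\mu_0\le\mu_1$) and the hypothesis $\mu_0\ge 2^{-C_6/\epsilon}\mu$, we obtain $\min(I_1(f),\mu_0,\mu_1)\ge\zeta\mu$ for $\zeta:=2^{-C_7/\epsilon}$, where $C_7:=\max(C_6,2C_1)$. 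Plugging $\zeta$ into Lemma~\ref{lem:Win if the influence is medium} yields the key estimate
\[
M_1\mu_1+M_0\mu_0\le(2M-c\zeta)\mu.
\]

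With this in hand, it suffices to verify $\epsilon'(2M-c\zeta)\le 2\epsilon M$, equivalently $\epsilon'-\epsilon\le \epsilon\cdot c\zeta/(2M-c\zeta)$. Since $1/(1-t)\ge 1+t$ for $t\in[0,1)$, it is enough to have $\epsilon'-\epsilon\le \epsilon c\zeta/(2M)$, that is, $2^{-C_5/\epsilon}\le \epsilon c\zeta/(2M)$. Using $M\le 1/\epsilon$ and $\zeta\ge 2^{-C_7/\epsilon}$, the right-hand side is at least $(c\epsilon^2/2)\cdot 2^{-C_7/\epsilon}$, so the required inequality becomes $C_5-C_7\ge \epsilon\log_2(2/(c\epsilon^2))$. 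The right-hand side is bounded on the range $\epsilon\in(0,C_4]$ (available by Lemma~\ref{lem:Ellis-1}), so any $C_5$ sufficiently larger than $C_7$ works, which proves the claim.

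The main obstacle I anticipate is the auxiliary case $\mu>\tfrac12$, where Lemma~\ref{thm:Max-Inf-large} requires its parameter $\delta$ to shrink with $1-\mu$ and the resulting lower bound on $I_1(f)$ degrades. I expect this regime to be handled either by a direct application of Friedgut's junta theorem (Theorem~\ref{thm:Friedgut junta}) — which now delivers a junta of the claimed size since $\log(1/\mu)=O(1)$ — or by a symmetric treatment after complementation; either route should require no fundamentally new ingredient.
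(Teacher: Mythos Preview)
Your proposal is correct and follows essentially the same route as the paper: reduce to $M\le 1/\epsilon$, invoke Lemma~\ref{thm:Max-Inf-large} to lower-bound $I_1(f)$, feed this into Lemma~\ref{lem:Win if the influence is medium} to get $M_1\mu_1+M_0\mu_0\le(2M-B)\mu$ with $B=2^{-O(1/\epsilon)}$, and use this slack to pass to a slightly larger $\epsilon'$ in the recursion. The only cosmetic difference is that the paper defines $\epsilon'=\tfrac{2M}{2M-B}\epsilon$ and then checks $\epsilon'\ge\epsilon+2^{-C_5/\epsilon}$, whereas you fix $\epsilon'=\epsilon+2^{-C_5/\epsilon}$ and verify the inequality directly; your explicit flag on the $\mu>\tfrac12$ regime (and the need to pick $\delta$ in Lemma~\ref{thm:Max-Inf-large}) is in fact more careful than the paper, which silently absorbs this into the constant.
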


\begin{proof}[Proof of Claim \ref{Claim: mu-small}]
Note that $f_{1}$ can be $\epsilon M_{1}\mu_{1}$-approximated
by a DNF of size $s':=s\left(\epsilon,\mu_{1},n\right)$, say $T_{1}\vee T_{2}\vee\cdots\vee T_{s'}$.
This implies that $f$ can be $\left(\frac{1}{2}\epsilon M_{1}\mu_{1}+\frac{1}{2}\mu_{0}\right)$-approximated
by the DNF $\left(1\wedge T_{1}\right)\vee\left(1\wedge T_{1}\right)\vee\cdots\vee\left(1\wedge T_{s'}\right)$.
The claim will follow once we show that $\frac{1}{2}\epsilon M_{1}\mu_{1}+\frac{1}{2}\mu_{0}\le\epsilon M\mu$,
provided that $C_{6}$ is large enough.

We may assume that $M\le\frac{1}{\epsilon}$, for otherwise $f$ is
$\epsilon M\mu$-approximated by the constant $0$ function. By Lemma
\ref{lem:Ellis-1}, there exists an absolute constant $C_4$, such
that $\tilde{s}\left(\mu,\epsilon,n\right)=1$ provided that $\epsilon\ge C_4$.
Thus, we may assume that $\epsilon\le C_4$. By Lemma \ref{lem:Win if the influence is super large},
\begin{equation}
2M\mu-M_{1}\mu_{1}-M_{0}\mu_{0}\ge\left(C_6/\epsilon-1\right)\mu_{0}\ge\left(M+\frac{C_6-1-C_4}{\epsilon}\right)\mu_{0}
\ge\left(M+2/\epsilon\right)\mu_{0},\label{eq:1},
\end{equation}
 where the last inequality holds provided that $C_6$ is large enough.
Substituting $\mu=\frac{\mu_{1}+\mu_{0}}{2}$ in (\ref{eq:1}), we
obtain
\[
\left(M-M_{1}\right)\mu_{1}+M\mu_{0}\ge\left(M-M_{1}\right)\mu_{1}+\left(M-M_{0}\right)\mu_{0}\ge\left(\frac{2}{\epsilon}+M\right)\mu_{0}.
\]
Rearranging yields
\begin{equation}
M_{1}\mu_{1}+\frac{2\mu_{0}}{\epsilon}\le\mu_{1}M\le2M\mu.\label{eq:2}
\end{equation}
We now multiply (\ref{eq:2}) by $\frac{\epsilon}{2}$ to finish
the proof of the claim.
\end{proof}

\begin{proof}[Proof of Claim \ref{Claim:medium mu_0}]
As mentioned before, we may assume that $M\le\frac{1}{\epsilon}.$
By Lemma \ref{thm:Max-Inf-large}, there exists $C_{1}>0$, such that $I_{1}\left(f\right)\ge2^{-C_{1}/\epsilon}\mu$.
By Lemma \ref{lem:Win if the influence is medium}, there exists $c>0$, such that
\begin{equation}
2M\mu-M_{1}\mu_{1}-M_{0}\mu_{0}\ge c2^{-\max\left\{ C_{1},C_{6}\right\} /\epsilon}\mu.\label{eq:3}
\end{equation}
Write $B=c2^{-\max\left\{ C_{1},C_{6}\right\} /\epsilon}$ and $\epsilon'=\frac{2M}{2M-B}\epsilon$.
Let $D_{1}$ be the DNF of size at most $\tilde{s}\left(\mu_{1},\epsilon',n-1\right)$
that $\epsilon'M_{1}\mu_{1}$-approximates $f_{1}$, and let $D_{0}$
be the DNF of size at most $\tilde{s}\left(\mu_{0},\epsilon',n-1\right)$
that $\epsilon'M_{0}\mu_{0}$-approximates $f_{0}$. Let $(x_1 \wedge D_{1}) \vee (\neg{x_1} \wedge D_{0})$ be the DNF defined by adding the literal $x_1$
to each term of $D_1$, adding the literal $\neg x_1$ to each term of $D_0$, and conjuncting the resulting DNFs. The size of the resulting DNF is at
most $\tilde{s}\left(\mu_{1},\epsilon',n-1\right)+\tilde{s}\left(\mu_{0},\epsilon',n-1\right) \leq 2\tilde{s}(\epsilon')$, and it clearly
$\frac{1}{2}\epsilon'M_{0}\mu_{0}+\frac{1}{2}\epsilon' M_{1}\mu_{1}$-approximates
$f$. By (\ref{eq:3}) we have
\[
\frac{1}{2}\epsilon'M_{0}\mu_{0}+\frac{1}{2}\epsilon' M_{1}\mu_{1} \leq \epsilon M\mu,
\]
and thus, $f$ can be $\epsilon M\mu$-approximated by a DNF of size at most $2\tilde{s}(\epsilon')$.
Finally, provided that $C_{5}$ is large enough, we have $\epsilon'=\frac{2M}{2M-B}\epsilon\ge2^{-C_{5}/\epsilon}+\epsilon$.
This completes the proof.
\end{proof}

We are now ready to prove Theorem \ref{thm:Main simplified}.
\begin{proof}[Proof of Theorem \ref{thm:Main simplified}]
By Lemma \ref{lem:Ellis-1}, there exists an absolute constant $C_{4}$
such that $\tilde{s}\left(\epsilon\right)=1$ for any $\epsilon\ge C_{4}$.
Let $C_{5}$ be the constant from Claim~\ref{Claim:medium mu_0}. By combination of Claims~\ref{Claim: mu-small} and~\ref{Claim:medium mu_0},
a simple inductive argument implies that $\tilde{s}\left(\epsilon\right)\le2\tilde{s}\left(\epsilon+2^{-C_{5}/\epsilon}\right)$.
Applying this inequality repeatedly $C_{4}2^{C_{5}/\epsilon}$
times, we obtain $\tilde{s}\left(\epsilon\right)\le C_{4}2^{2^{C_{5}/\epsilon}}$.
This completes the proof.
\end{proof}

\section{\label{sec:Examples}Sharpness Example}

In this section we present in detail the sharpness example for Theorem~\ref{thm:Main simplified}, that is also a counterexample to
Conjecture~\ref{Conjecture: Kahn Kalai}.

The example is based on the classical `tribes' function that was introduced by Ben-Or and Linial~\cite{BL} in 1985 and is known to be an extremal example for numerous results on Boolean functions.
\begin{defn}
The tribes function of width $w$ and size $s$ is defined by
\[
\mathrm{Tribes}_{w,s}\left(\mathbf{x}\right)=\left(x_{1}\wedge x_{2}\wedge\cdots\wedge x_{w}\right)\vee\left(x_{w+1},\ldots,x_{2w}\right)\vee\cdots\vee\left(x_{\left(s-1\right)w+1}\wedge\cdots\wedge x_{sw}\right).
\]
The dual of the tribes function is the function $\mathrm{Tribes}_{w,s}^{\dagger}$ defined by
\[
\mathrm{Tribes}_{w,s}^{\dagger}\left(\mathbf{x}\right)=1-\mathrm{Tribes}_{w,s}\left(\overline{\mathbf{x}}\right),
\]
where $\overline{\mathbf{x}}$ is the vector obtained from $\mathbf{x}$ by flipping all of its coordinates.
\end{defn}
We will use two well-known results: one regarding properties of the dual tribes function, and another regarding approximation by DNFs.
\begin{thm}[\cite{OW07}]
\label{thm:O'Donnell-Wimmer}Let $w\in\mathbb{N}$, and let $f=\mathrm{Tribes}_{w,2^{w}}^{\dagger}\left(\mathbf{x}\right)$.
Then $I\left(f\right)=\Theta\left(w\right)$, and $f$ cannot be $0.2$-approximated by a DNF of width at most $\frac{1}{3}2^{w}$.
\end{thm}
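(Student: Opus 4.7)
The proof splits into two parts.

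For $I(f) = \Theta(w)$: since $f$ is read-once over the $2^w$ tribes of width $w$, a variable $x_k$ in tribe $i$ is pivotal exactly when its $w-1$ tribe-mates are all $0$ and every other tribe is satisfied. By independence, $I_k(f) = 2^{-(w-1)}(1-2^{-w})^{2^w - 1}$, and summing over the $w\cdot 2^w$ variables gives $I(f) = 2w(1-2^{-w})^{2^w-1} = \Theta(w)$, since the trailing factor is bounded between positive absolute constants. For the width lower bound, set $w_0 = \lfloor 2^w/3\rfloor$, $\mu = (1-2^{-w})^{2^w}$, and $\alpha = (1-2^{-w})^{2\cdot 2^w/3}$ (so $\alpha\uparrow e^{-2/3}$ and $\mu\uparrow e^{-1}$), and let $D = T_1 \vee \cdots \vee T_m$ be any DNF with each $\mathrm{width}(T_i) \le w_0$. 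The core per-term estimate is
\[
\Pr[f = 1 \mid T_i = 1] \le \alpha,
\]
which follows since the $\le w_0$ fixed variables of $T_i$ touch at most $w_0$ tribes, leaving $\ge 2\cdot 2^w/3$ tribes entirely untouched; conditional on $T_i=1$, these tribes are i.i.d.\ uniform, so each is satisfied independently with probability $1-2^{-w}$, and $\{f=1\}$ forces all of them to be satisfied.

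The main obstacle is lifting this per-term bound to $\Pr[f = 1 \mid D = 1] \le \alpha$. Since $f$ is monotone, I would first reduce to the case where $D$ is itself monotone (all literals positive) by a shifting-type argument in the spirit of Lemma~\ref{lem:shifting}, verifying that applying the monotonizing shifts to the symmetric difference $\{D \neq f\}$ does not increase its measure. For a monotone $D$, decompose $\{D = 1\} = \bigsqcup_i E_i$ with $E_i = T_i \cap \bigcap_{j < i}\neg T_j$. Inside the subcube $T_i$, the event $\bigcap_{j<i}\neg T_j$ is monotone decreasing in the unfixed coordinates (each $\neg T_j$ being a disjunction of negated positive literals), while $f|_{T_i}$ is monotone increasing, so the FKG inequality applied on the conditional product measure on the unfixed coordinates yields $\Pr[f = 1 \mid E_i] \le \Pr[f = 1 \mid T_i] \le \alpha$; averaging over $i$ weighted by $\Pr[E_i]/\Pr[D=1]$ gives the desired bound.

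Given $\Pr[f = 1 \mid D = 1] \le \alpha$, a direct computation shows
\[
\Pr[D \neq f] = \Pr[D = 1] + \mu - 2\Pr[D = 1, f = 1] \ge \mu + (1 - 2\alpha)\Pr[D = 1],
\]
and combining with the companion bound $\Pr[D \neq f] \ge (1-\alpha)\Pr[D = 1]$ yields $\Pr[D \neq f] \ge \mu\cdot\min\bigl(1, (1-\alpha)/\alpha\bigr)$. For $w \ge 4$ one has $\alpha > 1/2$, so $\Pr[D \neq f] \ge \mu(1-\alpha)/\alpha$, which tends to $e^{-1/3} - e^{-1} \approx 0.348 > 0.2$; for $w \le 3$ one has $\alpha \le 1/2$, so $\Pr[D \neq f] \ge \mu \ge (1-2^{-w})^{2^w} \ge 1/4 > 0.2$, completing the proof.
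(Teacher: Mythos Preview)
The paper does not prove Theorem~\ref{thm:O'Donnell-Wimmer}; it is simply quoted from O'Donnell--Wimmer~\cite{OW07}, so there is no argument in the paper to compare against. Your computation of $I(f)=2w(1-2^{-w})^{2^w-1}=\Theta(w)$ is correct, as is the per-term estimate $\Pr[f=1\mid T_i=1]\le\alpha$ and the FKG step that lifts it to $\Pr[f=1\mid D=1]\le\alpha$ \emph{once $D$ is known to be a monotone DNF}. The final arithmetic deducing $\Pr[D\neq f]>0.2$ from that bound is also fine (the combination of the two inequalities you write down amounts to taking the convex combination with weight $(1-\alpha)/\alpha$ on the first).

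The genuine gap is the reduction to monotone $D$. The ``shifting-type argument in the spirit of Lemma~\ref{lem:shifting}'' you invoke is the coordinatewise monotonization $M_j$: on each edge in direction $j$, swap the two values of $D$ so that the larger one sits at $x_j=1$. It is true that $\Pr[M_jD\neq f]\le\Pr[D\neq f]$ whenever $f$ is monotone, but $M_j$ does \emph{not} preserve DNF width. A concrete counterexample: take
\[
D=(\neg x_1\wedge x_2\wedge x_3)\vee(x_1\wedge x_4\wedge x_5),
\]
which has width $3$. Then $M_1D$ restricted to $\{x_1=0\}$ equals $x_2\wedge x_3\wedge x_4\wedge x_5$, and one checks that the point $(0,1,1,1,1)$ admits no $1$-certificate of size $\le 3$ for $M_1D$; hence $M_1D$ has DNF width $4$. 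So after your shifts you no longer know that the approximating function is a DNF of width $\le w_0$, and the per-term bound---which is the entire engine of the argument---no longer applies. Without a valid width-preserving reduction to monotone DNFs, the FKG step also breaks down directly, since $\bigcap_{j<i}\{T_j=0\}$ need not be a decreasing event on the free coordinates of $T_i$ when some $T_j$ contain negated literals. Either a different reduction that genuinely preserves width, or a direct argument that bypasses monotonicity of $D$, is needed; this is precisely the nontrivial content supplied in~\cite{OW07}.
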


\begin{lem}
\label{lem:width at most log size}Let $D$ be a DNF of size $s$. Then it can be $s2^{-w}$-approximated by a DNF of width at most $w$
and of size at most $s$.
\end{lem}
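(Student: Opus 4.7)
The plan is a one-step truncation argument, and the heart of the matter is simply that a wide term occupies a small subcube.

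Write $D=T_{1}\vee T_{2}\vee\cdots\vee T_{s}$, and let $I=\{i\in[s]:\mathrm{width}(T_{i})\le w\}$ index the narrow terms. Define the candidate approximation
\[
D'=\bigvee_{i\in I}T_{i}.
\]
By construction, $D'$ is a DNF of width at most $w$ and of size $|I|\le s$, so all that remains is to bound $\Pr[D(\mathbf{x})\ne D'(\mathbf{x})]$.

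Since $D'$ is obtained from $D$ by discarding terms, we have $D'(\mathbf{x})\le D(\mathbf{x})$ pointwise, so the symmetric difference is one-sided:
\[
\{\mathbf{x}:D(\mathbf{x})\ne D'(\mathbf{x})\}=\{\mathbf{x}:D(\mathbf{x})=1,\,D'(\mathbf{x})=0\}\subseteq\bigcup_{i\notin I}\{\mathbf{x}:T_{i}(\mathbf{x})=1\}.
\]
For each $i\notin I$, the term $T_{i}$ has more than $w$ literals, so the subcube $\{\mathbf{x}:T_{i}(\mathbf{x})=1\}$ has uniform measure at most $2^{-(w+1)}$. A union bound over the at most $s$ wide terms yields
\[
\Pr[D(\mathbf{x})\ne D'(\mathbf{x})]\le(s-|I|)\cdot 2^{-(w+1)}\le s\cdot 2^{-w},
\]
which is the desired estimate.

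There is no genuine obstacle in this argument; the only content is the observation that a term of width strictly greater than $w$ determines a subcube of measure at most $2^{-(w+1)}$, so dropping all such terms (and keeping the rest untouched) simultaneously reduces the width below the threshold, preserves the bound on the size, and costs at most $s\cdot 2^{-w}$ in agreement.
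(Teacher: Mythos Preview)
Your proof is correct and follows exactly the same approach as the paper's own proof: drop all terms of width exceeding $w$ and apply a union bound over the removed terms. You have simply spelled out the union bound in more detail than the paper does.
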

\begin{proof}
Remove from $D$ all terms that contain more then $w$ literals to obtain a new DNF, $D'$. A union bound implies that $D'$ $s2^{-w}$-approximates
$D$. This completes the proof.
\end{proof}

Now we are ready to present our tightness example for Theorem~\ref{thm:Main simplified}.
\begin{prop}
Let $w,l \in \mathbb{N}$, let $n=w2^{w}+l$, let $f$ be the function
\[
f\left(\mathbf{x}\right)=\begin{cases}
\mathrm{Tribes}_{w,2^w}^{\dagger}\left(x_1,x_2,\ldots,x_n-l \right) & x_{n-l+1}=\cdots=x_{n}=1\\
0 & \mbox{Otherwise}
\end{cases},
\]
and write $\mu=\mathbb{E}\left[f\right]$. Then on the one hand, $I\left(f\right)=2\mu\left(\log(1/\mu)+\Theta\left(w\right)\right)$.
On the other hand, $f$ cannot be $0.2\mu$-approximated by any DNF of width at most $\log(1/\mu)+\Theta\left(2^{w}\right)$. As a consequence, $f$ cannot
be $0.1\mu$-approximated by a DNF of size at most $2^{\Theta\left(2^{w}\right)}$. \end{prop}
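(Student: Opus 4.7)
The plan is to prove the three assertions in turn, with almost all of the work concentrated in the second one (the width lower bound). For the computation of $\mu$ and $I(f)$, I first condition on the event $S = \{x : x_{n-l+1} = \cdots = x_n = 1\}$. Since $f = \mathrm{Tribes}_{w,2^w}^{\dagger}(x_1,\ldots,x_{n-l}) \cdot \mathbf{1}_S$, one obtains $\mu = 2^{-l}(1-2^{-w})^{2^w}$ and hence $\log(1/\mu) = l + \Theta(1)$. Each of the $l$ coordinates in $\{n-l+1,\ldots,n\}$ has influence exactly $2\mu$, contributing $2l\mu$ to $I(f)$; each of the $w 2^w$ tribes coordinates contributes $2^{-l} I_k(\mathrm{Tribes}_{w,2^w}^{\dagger})$, summing to $2^{-l} \Theta(w) = \Theta(w\mu)$ by Theorem~\ref{thm:O'Donnell-Wimmer}. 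Combining yields $I(f) = 2\mu(\log(1/\mu)+\Theta(w))$.

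For the width bound, I assume toward contradiction that a DNF $D$ of width at most $w'$ is a $0.2\mu$-approximation of $f$, and decompose $D = D^{\mathrm{g}} \vee D^{\mathrm{b}}$, where $D^{\mathrm{g}}$ collects the terms containing all of $x_{n-l+1},\ldots,x_n$ as positive literals and $D^{\mathrm{b}}$ collects the rest. Terms in $D^{\mathrm{g}}$ are identically $0$ outside $S$, so $\Pr[D^{\mathrm{b}}=1, x \notin S] = \Pr[D=1, x \notin S] \le \Pr[D \neq f] \le 0.2\mu$. The key step is the following pointwise inequality: for every $z \in \{0,1\}^{n-l}$, if some bad term $T$ is satisfied at $(z, 1^l)$, then (since $T(z, 1^l) = 1$, $T$ has no negative literal on the last $l$ coordinates, and since $T \notin D^{\mathrm{g}}$, $T$ omits some $x_j$ with $j \in \{n-l+1,\ldots,n\}$) the same $T$ is also satisfied at $(z, y)$ where $y$ equals $1^l$ except $y_j = 0$. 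Hence $|\{y : D^{\mathrm{b}}(z,y)=1\} \cap \{1^l\}| \le |\{y : D^{\mathrm{b}}(z,y)=1\} \setminus \{1^l\}|$ for each $z$, which upon summing over $z$ yields $\Pr[D^{\mathrm{b}}=1, x \in S] \le \Pr[D^{\mathrm{b}}=1, x \notin S] \le 0.2\mu$. Since $\mu \cdot 2^l = (1-2^{-w})^{2^w} \le 1/e < 1/2$ for every $w \ge 1$, conditioning on $S$ gives $\Pr[D^{\mathrm{b}}|_S = 1] \le 0.2 \mu \cdot 2^l < 0.1$ and, similarly, $\Pr[D|_S \neq \mathrm{Tribes}_{w,2^w}^{\dagger}] < 0.1$. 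Thus $D^{\mathrm{g}}|_S$ approximates $\mathrm{Tribes}_{w,2^w}^{\dagger}$ within error strictly below $0.2$, and Theorem~\ref{thm:O'Donnell-Wimmer} forces its width to exceed $\tfrac{1}{3} 2^w$. But every term of $D^{\mathrm{g}}$ loses exactly $l$ positive literals upon restriction to $S$, so the restricted width is at most $w' - l$. Combining, $w' > l + \tfrac{1}{3} 2^w = \log(1/\mu) + \Theta(2^w)$.

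The size bound follows from Lemma~\ref{lem:width at most log size}: given a size-$s$ DNF that $0.1\mu$-approximates $f$ with $s \le 2^{c_0 2^w}$, set $w' = \log(1/\mu) + c_1 2^w$ for constants $c_0 < c_1 < 1/3$. The Lemma then produces a width-$w'$ DNF differing from the original on a set of measure at most $s \cdot 2^{-w'} \le 2^{(c_0 - c_1) 2^w}\mu \le 0.1\mu$, giving a width-$w'$ $0.2\mu$-approximation of $f$ and contradicting the width bound above. The main technical obstacle throughout is the pointwise comparison of the mass of $D^{\mathrm{b}}$ on and off $S$; this observation is what lets us charge the $l$ subcube-identifying literals against the DNF's width, recovering the $\log(1/\mu)$ additive term that would otherwise be lost upon restriction.
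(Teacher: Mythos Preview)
Your proof is correct and follows the same overall route as the paper: compute $\mu$ and $I(f)$ directly, reduce the width lower bound to Theorem~\ref{thm:O'Donnell-Wimmer} by restricting to the subcube $S=\{x_{n-l+1}=\cdots=x_n=1\}$, and deduce the size bound from the width bound via Lemma~\ref{lem:width at most log size}. The paper's version is much terser: it simply asserts that ``without loss of generality all terms of $D$ contain the variables $x_{n-l+1},\ldots,x_n$'' and then strips those literals before invoking Theorem~\ref{thm:O'Donnell-Wimmer}. Your decomposition $D=D^{\mathrm{g}}\vee D^{\mathrm{b}}$ together with the pointwise inequality $\Pr[D^{\mathrm{b}}=1,\,x\in S]\le \Pr[D^{\mathrm{b}}=1,\,x\notin S]$ is a genuine refinement of that step: it guarantees that the DNF you actually restrict, namely $D^{\mathrm{g}}$, already contains all $l$ of the subcube literals in every term, so the restricted width is at most $w'-l$ rather than merely $w'$. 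This is exactly what is needed to recover the additive $\log(1/\mu)\approx l$ in the width lower bound; if one reads the paper's ``w.l.o.g.'' as adjoining the missing literals to each term, the width may grow by up to $l$ and the argument only yields $w'>\tfrac13 2^w$. So your approach buys the sharper constant-free form of the width statement, at the cost of the short combinatorial comparison you identify as the main obstacle.
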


\begin{proof}
Suppose that $D$ is a DNF that $0.2\mu$-approximates $f$. Without
loss of generality, we may assume that all the terms of $D$ contain
the variables $x_{n-l+1},\ldots,x_{n}$. Let $D'$ be the DNF obtained
from $D$ by removing the variables $x_{n-l+1},\ldots,x_{n}$ from all its terms. Then the DNF $D'$ is $\left(0.2\cdot2^{l}\cdot\mu\left(f\right)\right)$-approximated
by the function $\mathrm{Tribes}_{w,2^{w}}^{\dagger}$. Theorem \ref{thm:O'Donnell-Wimmer}
implies that the width of $D'$ is at least $\Theta\left(w\right)$.
This completes the proof of the first part of the corollary. The {}``as a consequence'' statement follows immediately from Lemma~\ref{lem:width at most log size}. This completes the proof.
\end{proof}

\section{Open Problems}
\label{sec:Open}

We conclude this paper with a few open problems.

\medskip

\noindent \textbf{Functions with influence within a constant multiplicative factor from the minimum possible.} While Theorem~\ref{thm:Main simplified} describes rather precisely the structure of functions with $I(f) \leq 2\mu(f)(\log(1/\mu(f))+o(\log(1/\mu(f))$, the result we obtain for $I(f) = c \mu(f)\log(1/\mu(f))$ is not stronger than what one can get from Friedgut's Junta theorem. In~\cite{KK06}, Kahn and Kalai presented several conjectures on the structure of such functions (one of them is Conjecture~\ref{Conjecture: Kahn Kalai} above), and it will be interesting to see whether our techniques can be helpful in addressing them.

\medskip \noindent \textbf{Biased functions with respect to a biased measure.} As described in the introduction, structure theorems for balanced functions with respect to a biased measure $\mu_p$ on the discrete cube were studied in numerous papers (e.g.,~\cite{Bourgain99,Bourgain-Kalai,Friedgut-SAT,Hatami12}). Our paper deals with biased functions with respect to the uniform measure. Hence, the next natural goal in this respect is to study biased functions with respect to a biased measure.

To this end, one may use the classical techniques for reduction from the biased measure to the uniform measure (see, e.g.,~\cite{Friedgut-Kalai,Keller-Reduction})  to obtain a biased-measure version of Theorem~\ref{thm:Main simplified}. However, this version holds only when both $p$ and $\mu$ are not very small. It seems that more powerful techniques will be needed to address the (biased function, biased measure) case.

\medskip \noindent \textbf{A sharper approximation by a Junta?} We tend to believe that Theorem~\ref{thm:Main simplified} can be strengthened into an improved `approximation by Junta' theorem. Specifically, the following conjecture seems reasonable:
\begin{conjecture}\label{Conj:Junta}
For any $M,\epsilon>0$, any function $f\colon\left\{ 0,1\right\} ^{n}\to\left\{ 0,1\right\} $ that satisfies $I(f)\le 2\mu(\log(1/\mu)+M)$ can be $\epsilon \mu$-approximated by a function $g$ that depends on at most $O(\log(1/\mu) \cdot 2^{M/\epsilon})$ coordinates.
\end{conjecture}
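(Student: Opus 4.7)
The natural approach is to refine the inductive argument of Theorem~\ref{thm:Main simplified} to control the number of distinct coordinates used by the approximant, rather than the size of the approximating DNF. Define $\tilde{j}(\mu,\epsilon,M)$ as the minimum number of variables in a Junta that $\epsilon\mu$-approximates any Boolean $f$ with $\mathbb{E}[f]=\mu$ and $I(f)\le 2\mu(\log(1/\mu)+M)$; the target is $\tilde{j}(\mu,\epsilon,M)=O(\log(1/\mu)\cdot 2^{M/\epsilon})$. The base case is handled by Ellis's stability theorem (Theorem~\ref{thm:Ellis}): whenever $M/\epsilon$ is below an absolute constant, $f$ is $\epsilon\mu$-approximated by a single subcube, which depends on at most $\log(1/\mu)+O(1)$ coordinates, matching the target up to constants.

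For the inductive step I would split on the most influential coordinate, exactly as in Claims~\ref{Claim: mu-small} and~\ref{Claim:medium mu_0}; Lemma~\ref{thm:Max-Inf-large} guarantees that this coordinate has influence at least $2^{-CM/\epsilon}\mu$. In the one-sided case (small $\mu_0$), the recurrence $\tilde{j}(\mu,\epsilon,M)\le 1+\tilde{j}(\mu_1,\epsilon,M_1)$ with $\mu_1\ge(2-o(1))\mu$ telescopes cleanly: each step adds one coordinate while roughly halving $1/\mu$, so a chain of one-sided steps contributes at most $\log(1/\mu)$ coordinates in total, which is precisely the linear factor sitting in front of $2^{M/\epsilon}$ in the target bound.

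The core difficulty is the two-sided case (Claim~\ref{Claim:medium mu_0}): the naive recurrence $\tilde{j}(\mu,\epsilon,M)\le 1+\tilde{j}(\mu_0,\epsilon',M_0)+\tilde{j}(\mu_1,\epsilon',M_1)$ with progress $\epsilon'-\epsilon\approx 2^{-O(1/\epsilon)}$ yields a doubly-exponential Junta size in $1/\epsilon$, matching the DNF bound of Theorem~\ref{thm:Main simplified} rather than the conjectured single-exponential target. To collapse this blowup one must argue that the Juntas chosen for $f_0$ and $f_1$ can be made to share almost all of their coordinates, so that the two-sided recurrence is effectively additive. A natural candidate for the common coordinate set is $S=\{i:I_i(f)\ge \mu\cdot 2^{-CM/\epsilon}\}$: the total-influence hypothesis forces $|S|=O((\log(1/\mu)+M)\cdot 2^{CM/\epsilon})$, and the identity $I_i(f)=\tfrac12(I_i(f_0)+I_i(f_1))$ for $i\ne 1$ ensures that $S$ contains the influential coordinates of both $f_0$ and $f_1$ after adjusting the threshold.

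Proving rigorously that $f$ is $\epsilon\mu$-approximated by a function depending only on $x_S$ is the step I expect to be the main obstacle. A direct Friedgut-style Fourier/hypercontractivity argument adapted to the biased setting loses a prohibitive factor of $(1/\mu)^{O(M/\epsilon)}$, which reproduces only Theorem~\ref{thm:Friedgut junta}. Replacing this loss by the sharper $\log(1/\mu)\cdot 2^{O(M/\epsilon)}$ will presumably require combining hypercontractive bounds with the shifted DNF structure constructed in Theorem~\ref{thm:Main simplified} itself — for instance, using the bounded width $\log(1/\mu)+2^{O(M/\epsilon)}$ of the approximating DNF to control the high-degree Fourier tail of $f$ outside $S$, so that the approximation by $\mathbb{E}[f\mid x_S]$ can be analysed on the low-degree part alone.
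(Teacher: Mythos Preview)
The statement you are attempting to prove is \emph{Conjecture}~\ref{Conj:Junta}, which appears in Section~\ref{sec:Open} (Open Problems) of the paper. The paper does not prove it; the authors explicitly present it as an open problem, saying only that ``the following conjecture seems reasonable.'' There is therefore no paper proof to compare your proposal against.

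That said, your write-up is not a proof either, and you are candid about this: you correctly isolate the two-sided branching in the analogue of Claim~\ref{Claim:medium mu_0} as the obstruction, and you note that the naive recursion gives only the doubly-exponential bound already delivered by Theorem~\ref{thm:Main simplified}. Your suggestion to force the Juntas for $f_0$ and $f_1$ to share a common coordinate set $S=\{i:I_i(f)\ge \mu\cdot 2^{-CM/\epsilon}\}$ is natural, and the size bound $|S|=O((\log(1/\mu)+M)\cdot 2^{CM/\epsilon})$ is indeed an immediate consequence of the total-influence hypothesis. The genuine gap, which you yourself flag, is the claim that restricting to $x_S$ already gives an $\epsilon\mu$-approximation: the standard Friedgut argument bounding $\sum_{T\not\subseteq S}\hat f(T)^2$ via hypercontractivity loses exactly the $(1/\mu)^{O(M/\epsilon)}$ factor you mention, and nothing in the paper (including the bounded-width DNF from Theorem~\ref{thm:Main simplified}) is known to repair this. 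Your final paragraph is speculation rather than argument --- using the width bound to control the high-degree tail would require showing that the Fourier mass of $f$ above level $\log(1/\mu)+2^{O(M/\epsilon)}$ is $o(\mu)$, which does not obviously follow from $\epsilon\mu$-closeness to a DNF of that width. In short, your outline is a reasonable plan of attack on an open problem, but it does not close the gap, and neither does the paper.
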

For $I(f) = c \mu(f)\log(1/\mu(f))$, Conjecture~\ref{Conj:Junta} is no better than the Junta theorem, but in the range $I(f) = 2\mu(f)(\log(1/\mu(f))+M$ with $M=o(\log(1/\mu(f)))$, the size of Junta it yields is much smaller. In particular, when $M$ is constant, it becomes as small as the clearly optimal $\Theta(\log(1/\mu))$.

\section*{Acknowledgements}

We are grateful to David Ellis for communicating to us the independent work~\cite{KL17}, and to Peter Keevash and Eoin Long (the authors of~\cite{KL17}) for useful suggestions.


\end{document}